\documentclass[final, 12pt]{amsart}
\newif\ifsharetheoremcounter\sharetheoremcountertrue

\usepackage{amsmath, amssymb, esint}
\usepackage{graphicx, xcolor}
\usepackage{microtype}

\ifdefined\ifenablehyperref\else\expandafter\newif\csname
        ifenablehyperref\endcsname\enablehyperreftrue\fi
\ifenablehyperref\usepackage[final]{hyperref}\else\fi

\ifdefined\ifdraft\else\expandafter\newif\csname ifdraft\endcsname\fi
\ifdim\overfullrule>0pt \drafttrue\else\draftfalse\fi

\ifdefined\ifsharetheoremcounter\else\expandafter\newif\csname
        ifsharetheoremcounter\endcsname\sharetheoremcounterfalse\fi

\allowdisplaybreaks[4]

\newcounter{modifiedcounter}

\ifdraft
    \newcommand{\fixme}[1]{\marginpar{\parbox{0in}{\fbox{\parbox{0.6255in}{
            \raggedright \scriptsize #1}}}}}
    
    \newcommand{\dummyline}[1]{\vspace{0.5em}\noindent\rule{\textwidth}{#1 pt}
            \vspace{0.5em}}

\else
    \newcommand{\fixme}[1]{}
    
    \newcommand{\dummyline}[1]{}

\fi

\newlength{\defaultarrayrulewidth}
\theoremstyle{remark}
\numberwithin{equation}{section}

\newtheorem*{claim*}{Claim}

\newtheorem{theorem}{Theorem}[section]

\ifsharetheoremcounter
    \newtheorem{definition}[theorem]{Definition}
    \newtheorem{claim}[theorem]{Claim}
    \newtheorem{lemma}[theorem]{Lemma}
    \newtheorem{proposition}[theorem]{Proposition}
    \newtheorem{corollary}[theorem]{Corollary}
    \newtheorem{remark}[theorem]{Remark}
\else

    \newtheorem{lemma}{Lemma}[section]
    \newtheorem{proposition}{Proposition}[section]
    \newtheorem{corollary}{Corollary}[section]
    \newtheorem{remark}{Remark}[section]
\fi

\DeclareMathOperator{\grad}{grad}

\DeclareMathOperator{\id}{id}

\DeclareMathOperator{\Ric}{Ric}

\DeclareMathOperator{\tr}{tr}

\DeclareMathOperator*{\osc}{osc}

\author[S. Huang]{Shaochuang Huang}
 \address[Shaochuang Huang]{School of Science, Shenzhen Campus of Sun Yat-sen
         University, No. 66, Gongchang Road, Guangming District, Shenzhen,
         Guangdong 518107, P. R. China.}
 \email{huangshch23@mail.sysu.edu.cn}
 \thanks{S. Huang is partially supported by a regular fund from Shenzhen
         Science
         and Technology Program No. JCYJ20240813151005007 and a start-up
         fund from SYSU}

 \author[Z. Peng]{Zhuo Peng}
 \address[Zhuo Peng]{School of Science, Shenzhen Campus of Sun Yat-sen
         University, No. 66, Gongchang Road, Guangming District, Shenzhen,
         Guangdong 518107, P. R. China.}
 \email{pengzh55@mail2.sysu.edu.cn}

\usepackage{lmodern}
\usepackage{needspace}
\hypersetup{
        pdftitle={Strong Uniqueness for Instantaneously Complete Chern-Ricci
                Flow},
        pdfsubject={Instantaneously complete Chern-Ricci and Kahler-Ricci
                flows},
        pdfkeywords={Chern-Ricci flow, strong uniqueness, Hermitian surface}}

\begin{document}

\title[Strong uniqueness for Chern-Ricci flow]
        {Strong Uniqueness for Instantaneously Complete Chern-Ricci Flow}
\subjclass[2020]{Primary 53E30; Secondary 53C55, 53E20}
\date{}

\begin{abstract}
   In this paper, we study strong uniqueness for smooth instantaneously complete Chern–Ricci flows on noncompact complex manifolds. Under suitable reference metric and positivity assumptions, we establish strong uniqueness for initial Hermitian metrics which are Kähler outside a compact set. The initial metric may be incomplete, and no curvature, metric comparison, or growth bounds are imposed on the evolving solutions. In particular, we obtain strong uniqueness from complete initial metrics in this class with bounded Chern–Ricci curvature. We also prove stationarity of flows from the Euclidean metric on \(\mathbb C^n\) and uniqueness of the instantaneously complete flow from the Euclidean metric on the complex unit ball for all positive time. Finally, on complex surfaces, we show that a Ricci flow remaining Hermitian for a fixed complex structure is Kähler at all times if it is Kähler at one time, and derive a corresponding strong uniqueness for Ricci flows in this class.
\end{abstract}

\maketitle

\section{Introduction}

Uniqueness for Ricci flow is an important and interesting problem in the study of Ricci flow. In 1982, Hamilton introduced Ricci flow to study the geometry and topology of manifolds in his seminal work \cite{Hamilton82JDG}, where he proved the short-time existence and uniqueness for Ricci flow on closed manifolds via the sophisticated machinery of the Nash-Moser implicit function theorem. Later, Hamilton \cite{Hamilton95Survey} provided an elegant proof of  uniqueness for Ricci flow on closed manifolds using DeTurck's trick and harmonic map heat flows. On a noncompact manifold, uniqueness usually requires some control
        of the solutions at infinity.
Chen and Zhu \cite{CZ06JDG} proved uniqueness among complete solutions with uniformly bounded curvature by considering harmonic map heat flows coupled with Ricci flow with a fixed target. Kotschwar \cite{Kotschwar17MRL} obtained uniqueness via energy method under a
        time-integrable curvature bound, see also \cite{Kotschwar-14CAG,HT22JGA}.
Recently, Lee \cite{Lee26arXiv} proved uniqueness for complete
        solutions satisfying a scaling-invariant $C/t$ curvature bound by considering harmonic map heat flows coupled with Ricci flow with evolving target metrics.  Very recently, Flower \cite{Flower26} obtained uniqueness for Ricci flow starting from certain Alexandrov spaces under scaling-invariant curvature and injectivity radius bounds.

On the other hand,     
Chen \cite{Chen09JDG} proved strong uniqueness results for complete Ricci flows in dimensions $2$ and $3$. Here strong uniqueness means uniqueness without a priori curvature or metric bounds on the evolving solutions. In particular, Chen proved that the Euclidean
        metric on $\mathbb R^2$ and $\mathbb R^3$  is stationary along complete Ricci flow. Strong uniqueness is a distinctive rigidity phenomenon in Ricci flow. By contrast, even the classical heat equation on the real line admits a nontrivial smooth solution with zero initial data in the absence of growth restrictions at spatial infinity, see the famous Tychonoff's example \cite{Tychonoff1935}. In dimension $2$, Topping \cite{Topping15GT} established strong uniqueness among
        instantaneously complete Ricci flows from arbitrary initial metrics, without any completeness and curvature bound assumptions on  initial data. Cheng and Zhang \cite{ChengZhang25} proved strong uniqueness from Euclidean initial data under the assumption of $\mathcal{L}$-completeness for the Ricci flow in higher dimensions. 

In the K\"ahler case, the potential equation provides another approach to
       the uniqueness problem. 
Fan \cite{Fan07PAMS} proved a uniqueness theorem for complete K\"ahler-Ricci flow under uniform
        equivalence to the initial metric, assuming bounded initial curvature
        and a global Ricci potential.
The uniqueness theorem of Chau, Li and Tam \cite[Theorem~2.1]{CLT17TAMS} includes
        flows uniformly equivalent to a complete reference metric admitting a
        proper exhaustion with bounded gradient and complex Hessian.
Lee \cite[Theorem~1.1]{Lee19JGA} proved a uniqueness theorem for complete K\"ahler-Ricci flow  assuming uniform equivalence of the two flows
        and a $C/t$ curvature bound on one of them. 
Chau, Li and Shen \cite[Proposition~3.1]{CLS18AdvMath} proved uniqueness in a class
        of bounded potentials along the flows on complements of smooth divisors in compact
        K\"ahler manifolds.

In this paper, we will consider the strong uniqueness problem for a  complex analogue of Ricci flow, namely the Chern-Ricci flow  on a connected complex
        manifold $(M^n,J)$ of complex dimension $n$, and we require no curvature, metric comparison, or growth  assumption on the evolving solutions.
        
        The Chern-Ricci flow was first studied by Gill \cite{Gill} in a special setting and then introduced and developed in general by Tosatti and Weinkove in \cite{TW15JDG} for closed complex manifolds. Lee and Tam \cite{LT20JDG} studied the uniformization conjecture in K\"ahler geometry by developing Chern-Ricci flow theory on complete noncompact manifolds. In \cite{HLT19CVPDE}, the first-named author in this paper, Lee and Tam studied the existence problem  for instantaneously complete Chern-Ricci flow on noncompact complex manifolds. One motivation for this paper is to investigate strong uniqueness among such instantaneously complete Chern-Ricci flows. Since any Ricci flow on
        an oriented $2$-dimensional Riemannian manifold is K\"ahler, our results in this paper may be viewed as  higher dimensional analogues of Topping's strong uniqueness for Ricci flow on surfaces \cite{Topping15GT}.

Before stating our main results, we fix some notation and conventions. For a Hermitian metric $h$, we denote its associated real $(1,1)$-form by
        $\omega$ and write
\[
    \omega=\sqrt{-1}\,h_{i\bar j}\,dz^i\wedge d\bar z^j,
    \qquad
    \Ric(\omega):=-\sqrt{-1}\,\partial\bar\partial
            \log\det(h_{i\bar j}).
\]
Here $\partial_i:=\partial/\partial z^i$ and
        $\partial_{\bar j}:=\partial/\partial\bar z^j$.
The form $\Ric(\omega)$ is the first Chern-Ricci form.
We also use $h$ for the underlying real inner product, so that
        $\omega(X,Y)=h(JX,Y)$ and $h_{i\bar j}=h(\partial_i,\partial_{\bar j})$,
        with $h$ extended complex bilinearly to the complexified tangent bundle.
Thus $\omega^n/n!$ is the volume form of $h$.
For a reference metric $\hat h$, we denote its associated form by $\hat\omega$,
        and use corresponding notation for other metrics and their associated
        forms.
When no confusion can arise, we also refer to $\omega$ as a Hermitian metric;
        completeness and curvature conditions then refer to the associated
        metric $h$.
We use form notation for exterior powers and metric notation for norms,
        connections, and differential operators.
We denote the Chern connection by $\nabla$ and the Levi-Civita connection by
        $D$, adding a metric superscript when needed.
We write $\grad_g f$ for the real gradient of a function $f$ with respect to a
        metric $g$.
For a Hermitian metric $h$ and a smooth real function $f$, we use
\[
    |\partial f|_h^2=h^{i\bar j}\partial_i f\,\partial_{\bar j}f
            =\frac12|\grad_h f|_h^2,
\]
        where $\grad_h f$ is taken with respect to the underlying real metric.
The Chern-Ricci flow is
\begin{equation}
    \label{eqn chern flow}
    \partial_t\omega=-\Ric(\omega),\qquad \omega(0)=\omega_0.
\end{equation}
Since the Chern-Ricci form is closed,
\begin{equation}
    \label{eqn preservation closedness}
    d\omega(t)=d\omega_0.
\end{equation}
In particular, K\"ahler initial metrics remain K\"ahler, and
        \eqref{eqn chern flow} then becomes the K\"ahler-Ricci flow.
See \cite{TW22Survey} for an account of Chern-Ricci flow.
A solution is called instantaneously complete if its associated metric $h(t)$ is
        complete for every $t>0$. Throughout, manifolds are smooth and without boundary; metrics are smooth; smoothness of flows on $M\times[0,S]$ includes smoothness up to
        $t=0$ on compact subsets of $M$.

As mentioned above, the existence results in \cite{HLT19CVPDE} lead naturally to the question of uniqueness among all
        smooth instantaneously complete solutions.
Our theorem addresses this question under the reference assumptions below; it
        does not have any requirements on the solutions except  smoothness and instantaneous completeness. 

\begin{theorem}
    \label{thm main}
    Let $\hat\omega$ be a complete Hermitian metric on an $n$-dimensional complex manifold $M$, K\"ahler outside a
            compact set, such that
    \begin{equation}
        \label{eqn reference curvature bound}
        \Ric(\hat\omega)\ge-K\hat\omega
    \end{equation}
            on $M$ for some $K\ge0$.
    Let $\omega_0$ be a Hermitian metric on $M$, K\"ahler outside a compact set,
            and satisfying
    \begin{equation}
        \label{eqn reference bounds}
        \omega_0\le C\hat\omega
    \end{equation}
            on $M$ for some $C\ge1$.
    Assume that there exist $s>0$, $\beta>0$, and a real function
            $u\in C^\infty(M)\cap L^\infty(M)$ such that
    \begin{equation}
        \label{eqn reference positivity}
        \omega_0-s\Ric(\hat\omega)+\sqrt{-1}\partial\bar\partial u
                \ge \beta\hat\omega.
    \end{equation}
    Suppose that $\omega_1(t)$ and $\omega_2(t)$ are smooth instantaneously
            complete solutions to the Chern-Ricci flow \eqref{eqn chern flow}
            on $M\times[0,S]$ with the same initial data $\omega_0$.
    Then
    \[
        \omega_1(t)\equiv\omega_2(t)
                \quad\text{for all}\quad 0\le t\le\min\{S,s\}.
    \]
\end{theorem}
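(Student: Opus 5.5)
Both flows are reduced to parabolic complex Monge--Amp\`ere equations for potentials. We then compare each potential with a canonical reference solution that is maximal/minimal in a suitable sense, obtained from the construction of \cite{HLT19CVPDE} on exhausting domains with infinite boundary data.

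\textbf{Reduction to potentials.} Set $\alpha(t)=\omega_0-t\Ric(\hat\omega)$. For a solution $\omega_i(t)$, define
\[
\phi_i(x,t)=\int_0^t\log\frac{\omega_i(\tau)^n}{\hat\omega^n}\,d\tau .
\]
Since $\Ric(\omega_i)=\Ric(\hat\omega)-\sqrt{-1}\partial\bar\partial\log(\omega_i^n/\hat\omega^n)$, integrating the flow gives
\[
\omega_i(t)=\alpha(t)+\sqrt{-1}\partial\bar\partial\phi_i,\qquad \partial_t\phi_i=\log\frac{(\alpha(t)+\sqrt{-1}\partial\bar\partial\phi_i)^n}{\hat\omega^n},\qquad \phi_i(0)=0 .
\]
By \eqref{eqn preservation closedness} the torsion of $\omega_i$ is that of $\omega_0$. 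The difference $\omega_1-\omega_2=\sqrt{-1}\partial\bar\partial(\phi_1-\phi_2)$ is therefore controlled by a single scalar function, and outside a compact set everything is K\"ahler. It suffices to prove $\phi_1=\phi_2$ on $[0,\min\{S,s\}]$.

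\textbf{Upper bound by a maximum principle with completeness of the reference.} First, show each $\phi_i$ is bounded above by a fixed bounded-above comparison function, using only \eqref{eqn reference curvature bound}, completeness of $\hat\omega$, and \eqref{eqn reference bounds}. For this I would use an exhaustion function $\rho$ of $\hat\omega$ with $|\partial\rho|_{\hat\omega}\le C$ and $\sqrt{-1}\partial\bar\partial\rho\le C\hat\omega$ (Tam's / Shi's smoothing, available because $\Ric(\hat\omega)\ge -K\hat\omega$ away from the non-K\"ahler compact set). The arithmetic--geometric mean inequality gives $\partial_t\phi\le n\log(\tr_{\hat\omega}\omega/n)$-type bounds. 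Then $\phi_i-\epsilon\rho$ attains interior maxima, and letting $\epsilon\to0$ bounds $\phi_i$ above. The catch is that we know nothing about $\omega_i$ at infinity, so we cannot bound $\tr_{\omega_i}\sqrt{-1}\partial\bar\partial\rho$. Following Topping's approach, I would instead let the barrier come from the other side: compare with a solution blowing up at the boundary of large domains.

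\textbf{Maximal and minimal solutions.} The key construction, following \cite{HLT19CVPDE}, is as follows. On relatively compact smooth domains $\Omega_k\uparrow M$, solve the Monge--Amp\`ere flow with $\phi=+\infty$ on $\partial\Omega_k$; this is a complete flow on $\Omega_k$. Condition \eqref{eqn reference positivity} with the bounded $u$ supplies the subsolution $\underline\phi=u$-adjusted $\,+ n t\log\beta - C$, valid for $t\le s$ because $\alpha(t)+\sqrt{-1}\partial\bar\partial(tu/s)\ge (t/s)\beta\hat\omega+(1-t/s)\omega_0$. This gives uniform lower bounds, and the resulting limit $\phi_\infty$ is a maximal solution.

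For any instantaneously complete $\phi_i$, the comparison $\phi_i\le\phi_k$ on $\Omega_k$ follows from the maximum principle, since $\phi_k=+\infty$ on the boundary. This yields $\phi_i\le\phi_\infty$. The reverse inequality $\phi_\infty\le\phi_i$ is the main obstacle: here instantaneous completeness of $\omega_i$ must be used. My first approach would be a Topping-style argument. For $t\ge\delta$, apply the maximum principle on $M$ to $\phi_\infty-\phi_i-\epsilon\,\rho_{i,\delta}$, where $\rho_{i,\delta}$ is an exhaustion adapted to the complete metric $\omega_i(\delta)$ whose $\sqrt{-1}\partial\bar\partial$ is bounded with respect to $\omega_i(t)$ for $t$ near $\delta$. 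The linearized operator is $\Delta_{\omega_{\text{interp}}}$, and the mean-value form of $\log\det$ gives $\partial_t(\phi_\infty-\phi_i)\le\tr_{\omega_i}\sqrt{-1}\partial\bar\partial(\phi_\infty-\phi_i)$ by concavity.

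To handle $t\to\delta$, use that $\phi_\infty-\phi_i\ge0$, and that the difference at time $\delta$ tends to $0$ locally uniformly as $\delta\to0$ by smoothness up to $t=0$. Working in the K\"ahler region at infinity, Chen--Topping type local estimates (the Kähler analogue of Topping's $\Ric\ge -C/t$) should give that $\omega_i(t)\ge c\,\hat\omega$-type control relative to the maximal solution. This would close the argument and give $\phi_1=\phi_\infty=\phi_2$, hence $\omega_1\equiv\omega_2$ for $t\le\min\{S,s\}$.
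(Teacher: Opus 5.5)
Your proposal has a genuine gap at exactly the step you flag as the main obstacle, and that step is the whole content of strong uniqueness.

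A maximum principle for $\phi_\infty-\phi_i-\epsilon\rho_{i,\delta}$ needs an exhaustion whose complex Hessian is bounded with respect to $\omega_i(t)$. That means curvature or metric-equivalence control of $\omega_i$ at spatial infinity. This is the same obstruction you identified in your upper-bound paragraph, and it is precisely what the theorem does not assume. Even granting such a principle, starting it at $t=\delta$ requires $\sup_M(\phi_\infty-\phi_i)(\cdot,\delta)\to0$ as $\delta\to0$. That is uniform-in-space control of both potentials near $t=0$, whereas smoothness up to $t=0$ on compact sets gives only locally uniform convergence. The closing appeal to Chen--Topping type estimates giving $\omega_i(t)\ge c\,\hat\omega$ is unsupported: no such lower metric bound is available for an arbitrary instantaneously complete flow. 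Finally, the maximal solution is not secured. Complete flows with infinite boundary potential on an arbitrary exhaustion $\Omega_k\uparrow M$ need, on each $\Omega_k$, a complete reference metric with the curvature and positivity properties that construction requires (essentially pseudoconvexity of $\partial\Omega_k$). Nothing in the hypotheses provides this.

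The missing idea is to avoid special solutions altogether. Instead, prove for every instantaneously complete solution the global two-sided bound
\[
    nt\Bigl(\log\frac{\beta t}{s}-1\Bigr)-\frac ts\osc_M u
    \le\psi\le n\int_0^t\log(C+K\tau)\,d\tau ,
\]
using only maximum principles that need completeness.

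For the upper bound, take the $\hat h$-trace rather than the $h(t)$-trace of $\omega=\alpha(t)+\sqrt{-1}\partial\bar\partial\psi$. This gives $\Delta_{\hat h}\psi\ge ne^{\partial_t\psi/n}-n(C+Kt)$, an inequality on the fixed reference metric. Subtract $n\int_0^t\log(C+K\tau)\,d\tau$, compose with a convex cutoff, and integrate in time against $e^{-t}/(C+Kt)$. Jensen's inequality then yields $\Delta_{\hat h}F\ge F^2/(2n)$ for a nonnegative function $F$ on $M$. A Keller--Osserman cutoff argument on the complete metric $\hat h$ (Ricci bounded below, bounded drift) forces $F\equiv0$, with no growth assumption and nothing required of $\omega_i$ at infinity.

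For the lower bound, your subsolution $\frac{t}{s}u+nt\log\frac{\beta t}{s}-nt$ (with $\sup_M u=0$) is exactly the right barrier. The comparison, however, goes through Chen's localized maximum principle, built on Perelman's distance distortion; it needs only completeness and a local Ricci upper bound near a fixed point, with the non-K\"ahler compact set contributing bounded errors. It gives $\mathcal R\ge-n/t$, hence $t\partial_t\tilde\psi-\tilde\psi\le nt$ for $\tilde\psi=\psi-\frac{t}{s}u$. Then $\Box_h\Theta\ge\Theta^2/(2nt)$ wherever $\Theta<0$, for $\Theta=\tilde\psi/t+n-n\log(\beta t/s)$. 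The quadratic absorption lets the maximum principle run from time $\varepsilon$ with no initial bound, and letting $\varepsilon\to0$ gives $\Theta\ge0$.

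Once $|\psi_1-\psi_2|\le b(t)$ with $b(t)\to0$ uniformly on $M$, the proof closes. The inequalities $\Box_{h_1}(\psi_1-\psi_2)\ge0$ and $\Box_{h_2}(\psi_2-\psi_1)\ge0$, together with the maximum principle for bounded functions on $[\varepsilon,S]$, finish it.
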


Inequalities between real $(1,1)$-forms are understood in the Hermitian sense.
All the assumptions in
        \eqref{eqn reference curvature bound}--\eqref{eqn reference positivity}
        concern the initial metric and the reference data.
The initial metric need not be complete or have bounded curvature, and no
        derivative bound on $u$ is assumed.
The K\"ahler condition is required only outside a compact set.
By \eqref{eqn preservation closedness}, this condition is preserved by the flow. The positivity condition \eqref{eqn reference positivity} is analogous to those used in the existence theory in
        \cite{LT20JDG,HLT19CVPDE}.
Although Theorems 1.1 and 1.2 in \cite{HLT19CVPDE} are stated for a normalized Chern–Ricci flow, their unnormalized counterparts follow by a standard rescaling and time change, see also \cite{ChauLee20}.

We briefly describe the proof here.
The upper bound for the canonical potential uses only the complete reference
        metric $\hat\omega$ and
        \eqref{eqn reference curvature bound}--\eqref{eqn reference bounds}.
We integrate a trace inequality in time and apply an elliptic maximum principle
        to the resulting nonnegative function.
For the lower bound, we use \eqref{eqn reference positivity} and a 
        parabolic maximum principle along the flow.
The latter applies because, outside a fixed compact set, the Chern-Ricci flow is
        a K\"ahler-Ricci flow; the errors on the compact set are bounded on
        every closed positive-time interval.
Both potential bounds tend uniformly to zero as $t\to 0^+$.
We can therefore compare the two potentials from time $\varepsilon>0$ and then
        let $\varepsilon\to 0^+$.

Taking the initial metric as the reference metric gives the following
        consequence.

\begin{corollary}
    \label{cor complete initial}
    Let $\omega_0$ be a complete Hermitian metric on an $n$-dimensional complex manifold  $M$, K\"ahler outside a
            compact set, and satisfying
    \begin{equation}
        \label{eqn initial ricci bounds}
        -K^-\omega_0\le\Ric(\omega_0)\le K^+\omega_0\quad\text{on}\quad M
                \quad\text{for some}\quad K^-,K^+\ge0.
    \end{equation}
    Then any two smooth instantaneously complete Chern-Ricci flows with initial data
            $\omega_0$ agree at every common time $t$ for which $K^+t<1$.
\end{corollary}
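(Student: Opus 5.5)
We apply Theorem~\ref{thm main} with the reference metric $\hat\omega:=\omega_0$. Take a time $t$ with $K^+t<1$ that is common to both flows, so $t\le S$ for some $S$ on which both $\omega_1$ and $\omega_2$ are defined. It suffices to produce $s\ge t$, $\beta>0$ and a bounded smooth $u$ satisfying the hypotheses of Theorem~\ref{thm main}. The conclusion then gives $\omega_1\equiv\omega_2$ on $[0,\min\{S,s\}]\ni t$.

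First, the standing assumptions hold for this choice of $\hat\omega$. The metric $\hat\omega=\omega_0$ is complete by hypothesis and Kähler outside a compact set. The lower bound in \eqref{eqn initial ricci bounds} is exactly \eqref{eqn reference curvature bound} with $K=K^-$. The condition \eqref{eqn reference bounds} holds trivially with $C=1$, since $\omega_0\le\hat\omega$.

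Next we check the positivity condition \eqref{eqn reference positivity}. We take $u\equiv0$, which is smooth and bounded, and choose $s$ with $t\le s$ and $K^+s<1$. Such an $s$ exists because $K^+t<1$: if $K^+=0$ take $s=t$ (or any larger value), and otherwise take $s\in[t,1/K^+)$. The upper bound in \eqref{eqn initial ricci bounds} gives
\[
  \omega_0-s\Ric(\hat\omega)=\omega_0-s\Ric(\omega_0)\ge(1-sK^+)\omega_0 .
\]
So \eqref{eqn reference positivity} holds with $\beta=1-sK^+>0$.

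One degenerate case needs care. If $t=0$, agreement is immediate because both flows have initial data $\omega_0$. For $t>0$ the choice above gives $s>0$, as Theorem~\ref{thm main} requires.

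The argument has no real obstacle, since every hypothesis transfers directly. The only point needing attention is that the strict inequality $K^+t<1$ is what leaves room to pick $s\ge t$ with $\beta>0$.
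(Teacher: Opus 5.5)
Your proposal is correct and is the paper's own proof. The paper likewise applies Theorem~\ref{thm main} with $\hat\omega=\omega_0$, $C=1$, $K=K^-$, $u=0$ and $\beta=1-sK^+$, for any $s>0$ with $sK^+<1$; your explicit choice of $s\ge t$ and your separate treatment of $t=0$ are harmless added bookkeeping.
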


When $K^+>0$, equality also holds at $t=1/K^+$ if both solutions are defined
        there, by continuity.
Corollary~\ref{cor complete initial} includes complete K\"ahler initial metrics
        with bounded Ricci curvature, without a bound on the full curvature
        tensor.
For comparison, Lee \cite[Corollary~4.1]{Lee26arXiv} proves short-time
        uniqueness among complete $U(n)$-invariant K\"ahler-Ricci flows on
        $\mathbb C^n$ when the initial metric has nonnegative bisectional
        curvature and uniformly noncollapsed unit balls.
Lee's argument obtains the scaling-invariant curvature estimate needed for uniqueness from these geometric assumptions.

In particular, taking $K^-=K^+=0$ in
        Corollary~\ref{cor complete initial} and comparing with the stationary
        Euclidean flow gives the following corollary.
Strong uniqueness from Euclidean initial data was another motivation for
        this work.
The corollary gives a partial answer to Chen's question on the strong uniqueness for Ricci flow in higher dimensions
        \cite[Section~4]{Chen09JDG}, namely we prove the strong uniqueness for  complete K\"ahler-Ricci flows from Euclidean initial data in all higher dimensions.

\begin{corollary}
    Let
    \[
        \omega_{\mathrm E}:=\frac{\sqrt{-1}}2
                \sum_{j=1}^n dz^j\wedge d\bar z^j
    \]
            be the standard Euclidean form on $\mathbb C^n$.
    Then every smooth instantaneously complete Chern-Ricci flow $\omega(t)$ on
            $\mathbb C^n\times[0,S]$ with $\omega(0)=\omega_{\mathrm E}$
            satisfies
    \[
        \omega(t)\equiv\omega_{\mathrm E}
                \quad\text{for all}\quad 0\le t\le S.
    \]
\end{corollary}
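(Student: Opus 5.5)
This corollary should follow from Corollary~\ref{cor complete initial} applied with $M=\mathbb C^n$ and $\omega_0=\omega_{\mathrm E}$. The Euclidean metric is complete and K\"ahler everywhere, so it is in particular K\"ahler outside a compact set. Its Chern--Ricci form is $\Ric(\omega_{\mathrm E})=-\sqrt{-1}\partial\bar\partial\log\det(\tfrac12\delta_{i\bar j})=0$. Hence \eqref{eqn initial ricci bounds} holds with $K^-=K^+=0$, and the time restriction $K^+t<1$ is vacuous for every $t\in[0,S]$.

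We compare $\omega(t)$ with the second solution $\omega_2(t):=\omega_{\mathrm E}$ for $t\in[0,S]$. Since $\Ric(\omega_{\mathrm E})=0$ and the form does not depend on $t$, we have $\partial_t\omega_2=0=-\Ric(\omega_2)$. So $\omega_2$ is a smooth solution of \eqref{eqn chern flow} on $\mathbb C^n\times[0,S]$ with initial data $\omega_{\mathrm E}$. It is complete at every time, hence instantaneously complete.

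Corollary~\ref{cor complete initial} then gives $\omega(t)=\omega_{\mathrm E}$ for every common time $t\in[0,S]$. No continuity argument at an endpoint is needed, since $K^+=0$.

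There is no real obstacle in this step; all the analytic work lies in Theorem~\ref{thm main} and Corollary~\ref{cor complete initial}. The only things to check are the normalization (the factor $\tfrac12$ in $\omega_{\mathrm E}$ only adds a constant to $\log\det$, so the Ricci form is still zero) and that the static flow satisfies all the smoothness and completeness requirements of the corollary, which it does.

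If one prefers to go directly through Theorem~\ref{thm main}, take $\hat\omega=\omega_{\mathrm E}$, $K=0$, $C=1$, $u=0$, $\beta=1$, and any $s\ge S$ in \eqref{eqn reference positivity}. The required positivity then reads $\omega_{\mathrm E}-s\cdot 0+0\ge\omega_{\mathrm E}$, which holds trivially, and the theorem gives the conclusion on $[0,\min\{S,s\}]=[0,S]$.
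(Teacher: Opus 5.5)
Your proposal is correct and matches the paper's argument: the paper obtains this corollary by taking $K^-=K^+=0$ in Corollary~\ref{cor complete initial} and comparing with the stationary Euclidean flow. Your direct route through Theorem~\ref{thm main} (with $\hat\omega=\omega_{\mathrm E}$, $K=0$, $C=1$, $u=0$, $\beta=1$, $s\ge S$) is the same proof with the intermediate corollary unwound.
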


A second application concerns incomplete initial metrics on the unit ball.
Ge, Lin and Shen \cite{GLS19MRL} constructed instantaneously complete K\"ahler-Ricci
        flows on strictly pseudoconvex domains and asked whether they are
        unique; see \cite[Remark~3.7]{GLS19MRL}.
The following corollary gives strong uniqueness under the stated initial metric assumptions. In particular, it holds for the  Euclidean  initial data. The existence part of such smooth instantaneously complete Chern-Ricci flows starting from the Euclidean initial data follows from \cite{HLT19CVPDE} with a standard rescaling and time change, see also \cite{ChauLee20}. We include a short proof in Section~\ref{sec consequences}. 

\begin{corollary}
    \label{cor unit ball}
    Let $\mathbb B^n:=\{z\in\mathbb C^n\mid |z|<1\}$, and let
    \[
        \hat\omega:=\sqrt{-1}\partial\bar\partial[-\log(1-|z|^2)].
    \]
    Assume that $\omega_0$ is a Hermitian metric on $\mathbb B^n$ such that
            $\omega_0\le C\hat\omega$ for some $C>0$ and $d\omega_0$ has compact
            support.
    Then any two smooth instantaneously complete Chern–Ricci flows from $\omega_0$
           agree throughout their common existence interval.
    In particular, the Euclidean form $\omega_{\mathrm E}$ restricted to
            $\mathbb B^n$ admits a unique such flow for all $t\ge0$.
\end{corollary}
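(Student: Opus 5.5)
We derive Corollary~\ref{cor unit ball} from Theorem~\ref{thm main}, taking $\hat\omega=\sqrt{-1}\partial\bar\partial[-\log(1-|z|^2)]$ as the reference metric. The steps are: check the reference hypotheses, check the positivity hypothesis for every $s>0$ with a bounded $u$, and then extend uniqueness to the whole common existence interval.

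\emph{Reference hypotheses.} The metric $\hat\omega$ is the complete Bergman-type metric on $\mathbb B^n$. It is Kähler, and it is Kähler--Einstein with
\[
    \Ric(\hat\omega)=-(n+1)\hat\omega ,
\]
so \eqref{eqn reference curvature bound} holds with $K=n+1$. Since $d\omega_0$ has compact support, $\omega_0$ is Kähler outside a compact set. The bound $\omega_0\le C\hat\omega$ gives \eqref{eqn reference bounds} after replacing $C$ by $\max\{C,1\}$.

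\emph{Positivity hypothesis.} Using the Einstein equation,
\[
    \omega_0-s\Ric(\hat\omega)=\omega_0+s(n+1)\hat\omega\ge s(n+1)\hat\omega ,
\]
because $\omega_0>0$. Hence \eqref{eqn reference positivity} holds with $u\equiv0$ and $\beta=s(n+1)$, for every $s>0$. Theorem~\ref{thm main} then gives $\omega_1(t)=\omega_2(t)$ for $0\le t\le\min\{S,s\}$. Since $s$ is arbitrary, the two flows agree on all of $[0,S]$. If the flows are only given on $[0,T)$, apply this on each $[0,S]$ with $S<T$.

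\emph{The Euclidean case.} For $\omega_0=\omega_{\mathrm E}$ we have $d\omega_{\mathrm E}=0$. Also $\omega_{\mathrm E}\le C\hat\omega$, because
\[
    \hat\omega=\frac{\sqrt{-1}\partial\bar\partial|z|^2}{1-|z|^2}
        +\frac{\sqrt{-1}\,\partial|z|^2\wedge\bar\partial|z|^2}{(1-|z|^2)^2}
        \ge\sqrt{-1}\partial\bar\partial|z|^2 = 2\omega_{\mathrm E}.
\]
So the uniqueness part applies to $\omega_{\mathrm E}$. It remains to get existence for all $t\ge0$.

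\emph{Existence.} I would invoke \cite{HLT19CVPDE}, after the rescaling and time change noted in the introduction (see also \cite{ChauLee20}). This produces an instantaneously complete Kähler--Ricci flow from $\omega_{\mathrm E}$ on $\mathbb B^n$ that exists for all time. The positivity condition holds for every $s$, which rules out any finite maximal time. Uniqueness on every bounded interval then gives a unique flow for all $t\ge0$.

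\emph{Main obstacle.} The key points are the Einstein identity, which makes positivity trivial for every $s$, and the precise long-time existence statement transferred from the normalized flow. I expect the latter to be the step needing the most care: the time change must be checked to map the normalized flow's existence interval onto all of $[0,\infty)$.
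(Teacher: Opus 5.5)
Your uniqueness argument is the paper's: it is Corollary~\ref{cor negative reference} with $K_1=K_2=n+1$, i.e.\ Theorem~\ref{thm main} with $u=0$ and $\beta=s(n+1)$ for every $s>0$. Citing completeness of $\hat\omega$ as standard is fine; the paper proves it from the exhaustion $\rho=-\log(1-|z|^2)$, which has $|\partial\rho|_{\hat h}^2=|z|^2\le1$. Your check that $\hat\omega\ge2\omega_{\mathrm E}$ is also correct.

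What is missing is the existence half, which you support only by citing \cite{HLT19CVPDE}. Most of the paper's proof verifies that those theorems actually apply, and you check none of their hypotheses:
\begin{itemize}
\item $\hat h$ has bounded bisectional curvature and zero torsion.
\item $\rho$ is an exhaustion with $\sqrt{-1}\partial\bar\partial\rho=\hat\omega$ and bounded gradient.
\item $|\nabla^{\hat h}\omega_{\mathrm E}|_{\hat h}$ is bounded. This is the least obvious point; the paper gets it from $\Gamma^k_{ij}(\hat h)=q^{-1}(\delta^k_i\bar z^j+\delta^k_j\bar z^i)$ and $(\hat h_{i\bar j})^{-1}\le qI$, where $q=1-|z|^2$.
\item The positivity condition holds in the normalized form used there, $-\Ric(\hat\omega)+e^{-s}(\omega_{\mathrm E}+\Ric(\hat\omega))\ge(n+1)(1-e^{-s})\hat\omega$ for all $s>0$. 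This is your unnormalized condition at time $t=e^{s}-1$, divided by $1+t$. So your heuristic that positivity for every $s$ gives all-time existence is right, but it has to enter the theorem in this form.
\end{itemize}

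You also do not show that the resulting flow is smooth up to $t=0$ on compact subsets. The paper's convention requires this of a ``smooth'' flow, and it is needed for the flow to be an admissible solution in Theorem~\ref{thm main}. The paper obtains it from the $C^\infty_{\mathrm{loc}}$ convergence in \cite[Theorem~1.2]{HLT19CVPDE}, using the flow equation to control the mixed space-time derivatives.

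By contrast, the step you flag as the main obstacle is routine. Setting $\omega(t):=(1+t)\tilde\omega(\log(1+t))$ turns the normalized flow on $[0,\infty)$ into a solution of \eqref{eqn chern flow} on $[0,\infty)$ that is complete for $t>0$.
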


Finally, we investigate the strong uniqueness  for Riemannian Ricci flow on complex surfaces as an application of Theorem~\ref{thm main}. We show that in real dimension $4$, namely complex dimension $2$,
if a Ricci flow remains Hermitian for a fixed complex structure and is K\"ahler
        at one time, it is K\"ahler at every time.
This assertion requires neither completeness nor curvature bounds.
It follows from a pointwise evolution equation for the Lee form; we include the
        elementary calculation in Section~\ref{sec hermitian}.
Theorem~\ref{thm main} then gives the following strong uniqueness within this class of
        Hermitian Ricci flows.
Preservation of K\"ahlerity along complete noncompact Ricci flows  under curvature bounds was obtained by Shi \cite{Shi90PhD} and the first-named author in this paper and Tam
        \cite[Theorem~1.1]{HT18AJM}.
Here the assumption that the flow remains Hermitian is essential to the
        argument.

\begin{corollary}
    \label{cor hermitian uniqueness}
    Let $(M^2,J,g_0)$ be a connected K\"ahler surface.
    Suppose there are reference data satisfying the hypotheses of
            Theorem~\ref{thm main} for its associated form $\omega_0$, with
            $n=2$ and some $s>0$.
    Let $g_1(t)$ and $g_2(t)$, $0\le t\le S$, be smooth  solutions to the Riemannian Ricci flow, Hermitian for $J$ at every time and complete
            for $t>0$, with $g_1(0)=g_2(0)=g_0$.
    Then $g_1(t)\equiv g_2(t)$ for all $0\le t\le\min\{S,s/2\}$.
\end{corollary}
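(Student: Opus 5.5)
The plan is to reduce the corollary to Theorem~\ref{thm main} by first showing that both Ricci flows are in fact Kähler-Ricci flows, hence Chern-Ricci flows. Let $g(t)$ be either $g_1(t)$ or $g_2(t)$, write $\omega(t)=g(t)(J\cdot,\cdot)$, and let $\theta(t)$ be its Lee form, defined by $d\omega=\theta\wedge\omega$. In complex dimension $2$ this relation holds for every Hermitian metric, and $\omega$ is Kähler exactly when $\theta=0$. First I will derive, using only that $g(t)$ is $J$-Hermitian at every time, a pointwise evolution equation for $\theta$ under $\partial_t g=-2\Ric(g)$.

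For this I will use the decomposition of the Riemannian Ricci tensor of a Hermitian surface. Its $J$-invariant part differs from the Chern-Ricci form by terms built from $\theta$, $D\theta$ and $|\theta|^2$. The $J$-anti-invariant part must vanish, because $\partial_t g$ is $J$-invariant along the flow; this gives an extra algebraic constraint tying the anti-invariant part of $D\theta$ to terms quadratic in $\theta$. Applying $d$ to the expression for $\partial_t\omega$ and using $d\omega=\theta\wedge\omega$, I expect a linear identity of the form
\[
\partial_t\theta=A(\theta)\quad\text{or}\quad \partial_t\theta=A(\theta,D\theta),
\]
which vanishes when $\theta=0$ and involves no second derivatives of $\theta$. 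Ideally it should be a zeroth-order linear ODE at each point.

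The hardest step is to make this evolution purely pointwise in time, that is, an ODE along each fixed point $x$. Any surviving derivative terms would have to be removed using the anti-invariance constraint and the identity $d\theta=0$, which holds in dimension $2$ because $0=d^2\omega=d\theta\wedge\omega$ and wedging with $\omega$ is injective on $2$-forms. If I can obtain $\partial_t\theta=B(t,x)\theta$ with $B$ smooth, then uniqueness for linear ODEs gives $\theta(x,t)\equiv0$ for all $t$, since $\theta(x,0)=0$ because $g_0$ is Kähler. This needs no completeness and no curvature bounds, which matches the remark in the introduction.

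Once $g(t)$ is Kähler, $\Ric(g)$ is $J$-invariant and its associated form is the Chern-Ricci form. The Ricci flow therefore becomes $\partial_t\omega=-2\Ric(\omega)$. Setting $\tilde\omega(t):=\omega(t/2)$ gives a solution of \eqref{eqn chern flow} on $[0,2S]$ with initial data $\omega_0$, and it is instantaneously complete. Applying Theorem~\ref{thm main} to the two rescaled flows yields $\tilde\omega_1=\tilde\omega_2$ on $[0,\min\{2S,s\}]$. Undoing the time change gives $g_1=g_2$ on $[0,\min\{S,s/2\}]$, as claimed.
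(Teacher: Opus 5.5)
Your closing reduction matches the paper's: once both flows are K\"ahler, $\partial_t\omega_i=-2\Ric(\omega_i)$, the forms $\omega_i(\tau/2)$ are instantaneously complete Chern--Ricci flows from $\omega_0$, and Theorem~\ref{thm main} on $[0,\min\{2S,s\}]$ gives the claim. The gap is the preservation of K\"ahlerity. This is the only new ingredient, and you leave it as an expectation (``I expect'', ``if I can obtain'').

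One of your tools for removing derivative terms is false: $d\theta=0$ does not hold on a general Hermitian surface. Wedging with $\omega$ is injective on $1$-forms. However, $\beta\mapsto\beta\wedge\omega$ maps the $6$-dimensional space of $2$-forms onto the $1$-dimensional space of $4$-forms. So $d\theta\wedge\omega=0$ only says $\langle d\theta,\omega\rangle=0$. Closedness of $\theta$ characterizes locally conformally K\"ahler metrics, which is a real restriction.

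The derivative terms genuinely arise on your route. Write $\rho:=\Ric(g)(J\cdot,\cdot)$ as the closed Chern--Ricci form plus terms in $\theta$ and $D\theta$. Already for conformally K\"ahler metrics $\omega=e^{2f}\omega_K$ with $\theta=2df$, one finds $\rho-\Ric(\omega)\equiv\tfrac12(\delta\theta)\,\omega$ modulo closed forms and terms quadratic in $\theta$. Hence $d\rho$ contains $\tfrac12\,d(\delta\theta)\wedge\omega$. Terms of the form $\theta*D\theta$ are harmless, since $D\theta$ can be absorbed into a smooth coefficient matrix. This second-order term is not harmless. Unless it is eliminated, you have a PDE for $\theta$, and pointwise ODE uniqueness does not apply.

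The missing idea is the following identity, valid whenever $\Ric(g)$ is $J$-invariant:
\[
    d\rho=\frac{\mathcal R}{2}\theta\wedge\omega-\theta\wedge\rho
        =(\Ric^\sharp\theta)\wedge\omega .
\]
It contains no derivatives of $\theta$ or of the curvature. The paper obtains it by computing $\delta\rho$ in two ways.
\begin{itemize}
\item[(i)] From the contracted Bianchi identity, the $J$-invariance of $\Ric$, and $2(D_XJ)Y=\theta(JY)X-\theta(Y)JX-g(X,JY)\theta^\sharp+g(X,Y)J\theta^\sharp$.
\item[(ii)] From the four-dimensional Hodge identities $*\rho=\frac{\mathcal R}{2}\omega-\rho$ and $*(\alpha\wedge\omega)=-\alpha\circ J$.
\end{itemize}
The $d\mathcal R$ terms cancel, leaving $*d\rho$ algebraic in $\theta$. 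Differentiating $d\omega=\theta\wedge\omega$ with $\partial_t\omega=-2\rho$ then gives $\partial_t\theta=(\mathcal R\,\id-4\Ric^\sharp)\theta$. This is a linear ODE at each point, so $\theta(0)=0$ forces $\theta\equiv0$ without completeness or curvature bounds. Your argument needs such an identity, or some other way to eliminate the $d(\delta\theta)$ term, and as written it does not supply one.
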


The paper is organized as follows. In Section~\ref{sec maximum principle}, we prove some maximum principles which are useful for instantaneously complete Chern-Ricci
flows which are K\"ahler outside a compact set. In Section~\ref{sec potentials}, we establish the potential estimates and prove
        the main theorem.
Section~\ref{sec consequences} contains the applications.
In Section~\ref{sec hermitian}, we prove the preservation  of K\"ahlerity and its
        consequences for Hermitian Ricci flows on complex surfaces.

{\bf AI disclosure statement.} In preparing this manuscript, the authors used ChatGPT-6 Astra as an interactive tool to assist with exposition and with exploring and checking some of the mathematical computations and arguments. All interactions with AI were initiated and guided by the authors. Any AI-generated text incorporated into the manuscript was substantially revised by the authors, who independently verified all mathematical statements and proofs and take full responsibility for the final content of the manuscript.

\section{Some maximum principles}
\label{sec maximum principle}

In this section, we  adapt the localization arguments of Chen
\cite[Proposition~2.1]{Chen09JDG} to obtain maximum principles
with explicit dependence on the initial lower bound and a
time-dependent coefficient in the quadratic term. Related maximum principles were also obtained in \cite[Lemma~5.1]{HLTT22CAG} and \cite[Proposition~2.3]{CL23MathAnn}. Bounded errors in the metric evolution and a bounded drift term in the parabolic operator cause no change in
        the global estimate.
This observation permits its application to instantaneously complete Chern-Ricci flows which are K\"ahler
        outside a compact set.

Let $g(t)$ be a smooth family of Riemannian metrics and let $W(t)$ be a smooth
        vector field.
We write
\[
    \Box:=\partial_t-\Delta_{g(t)}^{\mathbb R}
            -W(t)\cdot\grad_{g(t)}.
\]
An inequality $\Box f\ge\delta(t)f^2$ for a continuous function is understood in
        the following barrier sense: at every point $(x,t)$, there is a smooth
        function $\bar f\ge f$ defined on a spacetime neighborhood, equal to
        $f$ at $(x,t)$, such that $\Box\bar f(x,t)\ge\delta(t)f(x,t)^2$.
We use this assumption only where $f<0$.
The analogous definition applies to $\Box f\ge0$.
For the reversed inequalities, we use smooth lower barrier functions.

\begin{proposition}
    \label{prop global maximum principle}
    Let $M$ be a manifold of real dimension $n$, and let $g(t)$, $t\in[s,S]$,
            be a smooth family of complete Riemannian metrics on $M$.
    Assume that $W(t)$ is uniformly bounded with respect to $g(t)$ on
            $M\times[s,S]$, and that
    \[
        \partial_tg=-2\Ric(g)+E
    \]
            for all $(x,t)\in M\times[s,S]$, where $E(t)$ is a smooth
            symmetric $2$-tensor uniformly bounded in the same sense.
    Let $\delta:[s,S]\to(0,+\infty)$ be continuous, and suppose
            $f\in C(M\times[s,S])$ satisfies $\Box f\ge\delta(t)f^2$ in the
            barrier sense wherever $f<0$, for $s<t<S$.
    If $f(\cdot,s)\ge-K$ for some $K\in[0,+\infty]$, then
    \begin{equation}
        \label{eqn global initial bound}
        f(x,t)\ge-\frac{K}{1+K\int_s^t\delta(\tau)\,d\tau}
                \quad\text{for all}\quad x\in M\quad\text{and}\quad s<t\le S.
    \end{equation}
    When $K=+\infty$, the right-hand side is
            $-(\int_s^t\delta(\tau)\,d\tau)^{-1}$; this case requires no initial
            lower bound.
\end{proposition}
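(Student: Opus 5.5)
We follow Chen's localization argument \cite[Proposition~2.1]{Chen09JDG}, cut off by a distance-like function whose heat operator is controlled under $\partial_tg=-2\Ric+E$. The aim is a local barrier estimate on a ball of radius $r$, and then to let $r\to\infty$.

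\textbf{Step 1: cutoff function.} Fix $x_0\in M$ and $t_0\in(s,S]$. We use the Perelman/Chen distance estimate: if $\Ric(g(t))\le (n-1)r_0^{-2}$ on $B_{g(t)}(x_0,r_0)$, then $\Box d_t(x_0,\cdot)\ge -C(n)r_0^{-1}$ in the barrier sense away from $B(x_0,r_0)$. The drift term contributes $-\sup|W|$, and the error $E$ contributes $-C\sup|E|$ through $\partial_t d_t$ (the $E$-part of $\partial_t d$ is bounded by $\tfrac12\sup|E|\,d$). Since there is no global curvature bound, we work on the compact set $\overline{B_{g(t_0)}(x_0,A)}\times[s,t_0]$. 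There all quantities, including the curvature, are bounded, so we can choose $r_0$ small and get
\[
\Box d\ge -C_1(1+d)
\]
on this region. The constant $C_1$ is independent of $A$ for the terms from $W$ and $E$. The curvature-dependent term $C(n)r_0^{-1}$ may depend on $A$, but it enters linearly and will be divided by $A$.

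\textbf{Step 2: comparison.} Set $\phi=\psi(d_t(x_0,\cdot)/A)$, where $\psi$ is a standard cutoff equal to $1$ on $[0,1/2]$ and $0$ on $[1,\infty)$, with $\psi'\le0$ and $|\psi'|^2\le C\psi$. Define $\Phi(t)=-K/(1+K\int_s^t\delta)$, which solves $\Phi'=\delta\Phi^2$. We compare $f$ with the barrier
\[
F=\frac{\Phi(t)}{\phi}-\epsilon_A(t),
\]
that is, $F$ is a lower bound of the form $\Phi/\phi$ modified so that $F\to-\infty$ at the cutoff boundary. This forces a first-touching point $(x_1,t_1)$ to lie in the interior of $\{\phi>0\}$ with $f<0$ there.

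At that point, the barrier inequality $\Box f\ge\delta f^2$ gives, after using $|\nabla\phi|^2\le C\phi/A^2$ and $\Box\phi\le C(A)/A$, an inequality of the form
\[
\delta F^2\le \Box F.
\]
Expanding $\Box(\Phi/\phi)$ produces error terms of size $|\Phi|(C/(A\phi)+C/(A^2\phi^2))$. These are absorbed by replacing $\Phi$ with $\Phi_A$, the solution of the Riccati equation with a slightly reduced $\delta$ and added terms; the standard trick is $\delta F^2\ge\delta|\Phi|^2/\phi^2$, which dominates the $1/\phi^2$ errors once $|\Phi|\ge C/(A^2\delta)$. When $K=\infty$, $\Phi(s)=-\infty$, so the barrier starts below everything and no initial bound is needed. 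Otherwise $f(\cdot,s)\ge-K\ge F(\cdot,s)$ because $\phi\le1$.

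\textbf{Step 3: limit.} Conclude $f(x_0,t_0)\ge\Phi_A(t_0)$ and let $A\to\infty$, so that $\Phi_A\to\Phi$ and we obtain \eqref{eqn global initial bound}. Completeness of $g(t)$ is what makes the balls of radius $A$ exhaust $M$ for each $t$. We also need the balls to be compatible across $[s,t_0]$ for the compactness in Step 1; we would handle this by using $d_{t}$ for each $t$, together with the locally bounded metric evolution on compact sets.

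\textbf{Main obstacle.} The delicate point is Step 1 with $A$-uniformity. The curvature is bounded only locally, so the distance-function estimate must contribute an error that decays after division by $A$. This requires either taking $r_0$ fixed, say $r_0=1$, with a curvature bound on a fixed neighborhood of $x_0$—but the barrier is needed along the whole ball, not near $x_0$. Following Chen, we would instead apply Perelman's lemma at the far-away points. The term $(n-1)(\tfrac23 K_{loc}r_0+r_0^{-1})$ is then compensated by choosing $r_0$ depending on the local curvature there and dividing by $A$. The real care is to check that the resulting sup over the annulus, divided by $A$, tends to zero, or else to circumvent this by a time-by-time ordering argument as in Chen.
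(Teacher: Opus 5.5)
Your Step~2 is essentially the paper's comparison for $u=\varphi f$, written in barrier form. Step~1, however, is not established, and the fix you sketch in the ``main obstacle'' paragraph would fail. Nothing in the hypotheses controls the curvature of $g(t)$ away from a compact set. If you choose $r_0$ according to the curvature at far-away points, the resulting error over the annulus $\{A/2<d<A\}$ can grow arbitrarily fast in $A$, and there is no reason it is $o(A)$.

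The missing observation is that the Perelman estimate you quote already needs a curvature bound \emph{only near the base point}. Let $\gamma$ be a unit-speed minimizing $g(t)$-geodesic from $x_0$ to $x$. Then $\partial_t d=-\int_0^\ell\Ric(\dot\gamma,\dot\gamma)\,d\sigma+\frac12\int_0^\ell E(\dot\gamma,\dot\gamma)\,d\sigma$. Comparison fields that grow linearly on the first segment of length $r$ and are parallel afterwards give $\Delta d\le\frac{n-1}{r}-\int_0^\ell\Ric(\dot\gamma,\dot\gamma)\,d\sigma+\frac23K_{\mathrm{loc}}r$, where $K_{\mathrm{loc}}$ bounds $\Ric$ on $B_{g(t)}(x_0,r)$ only. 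The full-length Ricci integrals cancel exactly. Hence one fixed $r$ and $K_{\mathrm{loc}}$, finite because $\bigcup_{t\in[s,S]}B_{g(t)}(x_0,r)$ is relatively compact, give $\Box d\ge-c-\sup|W|-\frac12\sup|E|\,d$ wherever $d>r$, with $c$ independent of the cutoff radius. You considered exactly this option and discarded it because ``the barrier is needed along the whole ball''. That objection is mistaken: the curvature along the rest of the geodesic never enters.

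Second, even granting this, your cutoff $\phi=\psi(d/A)$ does not handle the $E$-term. The quantity $\frac12\sup|E|\,d$ is of order $A$ on the transition region, so it contributes $O(1)$ to $\Box\phi$, not $C(A)/A\to0$. With such an error the comparison only yields $\phi f\ge\min\{-K/(1+(1-\varepsilon)K\int_s^t\delta),\,-C/(\varepsilon\delta_*)\}$, and the second term does not disappear as $A\to\infty$. This would already break Corollary~\ref{cor bounded heat principle}, which needs the sharp bound with initial bound $0$.

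The paper fixes this with the reparametrized distance $\tilde d=e^{a(t-s)}d+(c+\sup|W|)\int_s^te^{a(\tau-s)}\,d\tau$, where $a:=\frac12\sup|E|$. This function satisfies $\Box\tilde d\ge0$ where $d>r$, and $|\grad\tilde d|\le e^{a(S-s)}$. Take a nonincreasing cutoff $\eta$ with $|\eta'|+|\eta''|+2(\eta')^2/\eta\le C\sqrt\eta$. Then $\varphi_R=\eta(\tilde d/R)$ obeys $\Box\varphi_R+2|\grad\varphi_R|^2/\varphi_R\le C\sqrt{\varphi_R}/R^2$. The threshold in the comparison becomes $-C/(\varepsilon\delta_*R^2)$, which vanishes as $R\to\infty$, and letting $\varepsilon\to0$ gives the sharp estimate.
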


\begin{proof}
    Fix $p\in M$ and put $d_p(\cdot,t):=d_{g(t)}(\cdot,p)$.
    For each fixed radius, the corresponding balls centered at $p$ lie
            in a compact set for all $t\in[s,S]$.
    In the maximum-principle calculations below, we may treat the distance and
            its cutoffs as smooth by Calabi's trick; see \cite[proof of
            Lemma~2.3]{HT18AJM} for example.
    Choose $r>0$ and $K_{\mathrm{loc}}>0$ such that $\Ric(g(t))\le
            K_{\mathrm{loc}}$ on $B_{g(t)}(p,r)$ for all $t\in[s,S]$.
    We adapt the proof of Perelman's distance estimate
            \cite[Lemma~8.3~(a)]{Perelman02arXiv}; see also \cite[proof of
            Proposition~2.1]{Chen09JDG}.
    Let $\gamma:[0,\ell]\to M$ be a unit-speed minimizing $g(t)$-geodesic from
            $p$ to $x$, where $\ell:=d_p(x,t)>r$.
    Length variation and the index-form estimate, using comparison fields which
            grow linearly on the first segment of length $r$ and are parallel
            thereafter, give
    \begin{align*}
        \partial_t d_p
        &=-\int_0^\ell\Ric(\dot\gamma,\dot\gamma)\,d\sigma
                +\frac12\int_0^\ell E(\dot\gamma,\dot\gamma)\,d\sigma,\\
        \text{and}\quad\Delta_{g(t)}^{\mathbb R}d_p
        &\le\frac{n-1}{r}
                -\int_0^\ell\Ric(\dot\gamma,\dot\gamma)\,d\sigma
                +\frac23K_{\mathrm{loc}}r.
    \end{align*}
    Write $A:=\sup\limits_{M\times[s,S]}|E|_{g(t)}$ and
            $B:=\sup\limits_{M\times[s,S]}|W|_{g(t)}$.
    It follows that
    \[
        \Box d_p\ge-c-B-\frac A2d_p\quad\text{where}\quad
                c:=\frac{n-1}{r}+\frac23K_{\mathrm{loc}}r>0.
    \]
    Set
    \[
        \tilde d_p(x,t):=e^{A(t-s)/2}d_p(x,t)
                +(c+B)\int_s^t e^{A(\tau-s)/2}\,d\tau.
    \]
    Then $\Box\tilde d_p\ge0$ where $d_p>r$ and
            $|\grad_{g(t)}\tilde d_p|_{g(t)}\le e^{A(S-s)/2}$.
    Choose a smooth nonincreasing cutoff function $\eta:\mathbb R\to[0,1]$,
            equal to $1$ on $(-\infty,7/8]$ and to $0$ on $[1,+\infty)$,
            such that
    \[
        |\eta'|+|\eta''|+2\frac{(\eta')^2}{\eta}
                \le C\sqrt\eta\quad\text{whenever}\quad\eta>0,
    \]
            where $C>0$ is a universal constant.
       The existence of $\eta$ is standard,  see  \cite[Lemma~B.1]{DHP26arXiv} for example.

    For all $R>0$ sufficiently large, set
            $\varphi_R(x,t):=\eta(\tilde d_p(x,t)/R)$.
    Its transition region lies where $d_p>r$ and
    \[
        \Box\varphi_R+2\frac{|\grad_{g(t)}\varphi_R|_{g(t)}^2}{\varphi_R}
                \le\frac{C_1\sqrt{\varphi_R}}{R^2}
                \quad\text{for some}\quad C_1:=C e^{A(S-s)}.
    \]
    We record the argument in a form that also applies to
            Proposition~\ref{prop initial value localization}.
    Suppose the smooth cutoff $0\le\varphi\le1$ has uniformly compact support
            and satisfies
    \[
        \Box\varphi+2\frac{|\grad_{g(t)}\varphi|_{g(t)}^2}{\varphi}
                \le\frac{C_2\sqrt\varphi}{\Lambda}
    \]
            wherever $\varphi>0$, for some constants $C_2,\Lambda>0$.
    Put $u:=\varphi f$.
    If $u\ge0$, then the desired lower bound holds trivially.
    At a negative spatial minimum of $u$, we have $\varphi>0$ and
    \begin{equation}
        \label{eqn box u}
        \Box u\ge\frac{\delta u^2}{\varphi}
                -\frac{C_2|u|}{\Lambda\sqrt\varphi}
    \end{equation}
            in the barrier sense.

    Let $u_{\min}(t):=\min\limits_M u(\cdot,t)$, which is continuous by uniform
            compactness of the supports.
    We use the lower left Dini derivative
    \[
        \frac{d_-}{dt}u_{\min}(t)
        :=\liminf_{h\to 0^+}
                \frac{u_{\min}(t)-u_{\min}(t-h)}{h};
    \]
            see \cite[Definition~10.22]{CGIKLN08Book}.
    At a negative minimum $(x,t)$, choose a smooth upper barrier $\bar u$ for
            $u$ satisfying \eqref{eqn box u}.
    It has a spatial minimum at $(x,t)$, and
    \[
        u_{\min}(t-h)\le u(x,t-h)\le\bar u(x,t-h),\qquad
                u_{\min}(t)=\bar u(x,t).
    \]
    Hence $(d_-/dt)u_{\min}\ge\partial_t\bar u(x,t)\ge\Box\bar u(x,t)$.
    For any fixed $\varepsilon\in(0,1)$, let
            $\kappa:=C_2/(\varepsilon\delta_*\Lambda)$, where
            $\delta_*:=\min\limits_{[s,S]}\delta>0$.
    Then whenever $u_{\min}(t)\le-\kappa$, \eqref{eqn box u} gives
    \begin{equation}
        \label{eqn dini umin}
        \frac{d_-}{dt}u_{\min}(t)
        \ge(1-\varepsilon)\delta(t)u_{\min}^2(t)\ge0.
    \end{equation}
    If $u_{\min}(t)<-\kappa$ for some $t\in(s,S)$, let
            $F(\tau):=u_{\min}(t-\tau)$.
    Applying \cite[Lemma~10.23]{CGIKLN08Book} with $c=F(0)$ on the interval where
            $F<-\kappa$, we obtain $F\le F(0)<-\kappa$.
    By continuity, this interval extends to $t-s$.
    Thus $u_{\min}(\tau)\le u_{\min}(t)<-\kappa$ for all
            $\tau\in[s,t]$, and hence \eqref{eqn dini umin} holds on $(s,t]$.
    It follows that
    \[
        -\frac1{u_{\min}(t)}\ge-\frac1{u_{\min}(s)}
                +(1-\varepsilon)\int_s^t\delta(\tau)\,d\tau.
    \]
    Therefore, we obtain
    \begin{equation}
        \label{eqn reusable cutoff comparison}
        \varphi f\ge\min\left\{
            -\frac{K}{1+(1-\varepsilon)K\int_s^t\delta(\tau)\,d\tau},
            -\frac{C_2}{\varepsilon\delta_*\Lambda}\right\}
                \quad\text{for all}\quad s<t<S.
    \end{equation}

    Apply this estimate to $\varphi_R$ with $C_2=C_1$ and $\Lambda=R^2$.
    Let $R\to+\infty$ and then $\varepsilon\to 0^+$ to obtain
            \eqref{eqn global initial bound} for $s<t<S$.
    The conclusion at $t=S$ follows by continuity.
    This completes the proof.
\end{proof}

\begin{corollary}
    \label{cor bounded heat principle}
    Under the assumptions of Proposition~\ref{prop global maximum principle},
            suppose that $f\in C(M\times[s,S])$ is bounded below, satisfies
            $\Box f\ge0$ in the barrier sense wherever $f<0$, and has
            $f(\cdot,s)\ge0$.
    Then $f\ge0$ on $M\times[s,S]$.
\end{corollary}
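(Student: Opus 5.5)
The plan is to reduce Corollary~\ref{cor bounded heat principle} to Proposition~\ref{prop global maximum principle} by adding a small quadratic-type correction, or more directly by rerunning the localized comparison argument with $\delta\equiv0$ and exploiting the lower bound of $f$. The cleanest route seems to be a perturbation. Let $-L\le f$ on $M\times[s,S]$ with $L\ge 0$.

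\textbf{Step 1: the perturbed function.} For small $\epsilon>0$, set
\[
    f_\epsilon:=f+\epsilon(t-s)+\epsilon .
\]
Wherever $f_\epsilon<0$ we also have $f<0$. At such points the barrier for $f$ gives
\[
    \Box \bar f_\epsilon=\Box\bar f+\epsilon\ge\epsilon .
\]
Moreover $f_\epsilon\ge -L$, so there $f_\epsilon^2\le L^2$. Hence
\[
    \Box f_\epsilon\ge \epsilon\ge \frac{\epsilon}{L^2+1}\,f_\epsilon^2
\]
in the barrier sense wherever $f_\epsilon<0$. This puts us in the setting of Proposition~\ref{prop global maximum principle} with the constant $\delta=\epsilon/(L^2+1)$.

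\textbf{Step 2: the resulting bound.} Since $f(\cdot,s)\ge0$, we have $f_\epsilon(\cdot,s)\ge\epsilon>0$, so the initial lower bound holds with $K=0$. Formula \eqref{eqn global initial bound} with $K=0$ then gives $f_\epsilon\ge0$ on $M\times(s,S]$, that is,
\[
    f\ge-\epsilon(t-s)-\epsilon .
\]
Letting $\epsilon\to0^+$ yields $f\ge0$.

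\textbf{Main obstacle.} The delicate point is checking that the $K=0$ case of the proposition is legitimate; its formula does give $0$. Inspecting \eqref{eqn reusable cutoff comparison}, the $K=0$ case yields $\varphi f_\epsilon\ge -C_2/(\varepsilon\delta_*\Lambda)$, which tends to $0$ as $R\to\infty$, so this is fine. If one prefers to avoid this, a small positive $K$ also works: letting $K\to0$ in \eqref{eqn global initial bound} gives the same conclusion. The role of the boundedness of $f$ is exactly to turn the linear inequality into the quadratic one, since $\delta$ must be strictly positive.
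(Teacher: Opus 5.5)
Your proof is correct and follows the paper's strategy: modify $f$ in time to create a positive zeroth-order term, use the lower bound on $f$ to dominate $f^2$, and apply Proposition~\ref{prop global maximum principle} with zero initial lower bound. The statement of that proposition already allows an initial lower bound of $0$, so your ``main obstacle'' does not arise. The only difference is the modification: the paper takes $\tilde f=e^{-(t-s)}f$ with $f\ge-K$, so that $\Box\tilde f\ge-\tilde f\ge\tilde f^2/K$ wherever $\tilde f<0$. This gives $f\ge0$ directly, without your additive shift $\epsilon(1+t-s)$ and the final limit $\epsilon\to0^+$.
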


\begin{proof}
    Choose $K>0$ such that $f\ge-K$ and set $\tilde f:=e^{-(t-s)}f$.
    Then $\Box\tilde f\ge-\tilde f\ge\tilde f^2/K$ wherever $\tilde f<0$.
    Thus the result follows from applying
            Proposition~\ref{prop global maximum principle} to $\tilde f$
            with zero initial lower bound.
\end{proof}

We next apply these estimates to Chern-Ricci flow on a complex manifold of
        complex dimension $n$.
The complex Laplacian is
\[
    \Delta_{h(t)}f:=\tr_h(\sqrt{-1}\partial\bar\partial f)
            =h^{i\bar j}\partial_i\partial_{\bar j}f,
\]
        where $h(t)$ is the Hermitian metric associated to $\omega(t)$.
Set $g(t):=2h(t)$, viewed as a Riemannian metric.
Wherever $h(t)$ is K\"ahler, $\Delta_{h(t)}=\Delta_{g(t)}^{\mathbb R}$.

\begin{corollary}
    \label{cor chern maximum principle}
    Let $\omega(t)$ be a smooth instantaneously complete solution to the
            Chern-Ricci flow \eqref{eqn chern flow}, which is K\"ahler outside
            a compact set.
    On every $[s,\tau]\subset(0,S]$,
            Proposition~\ref{prop global maximum principle} and
            Corollary~\ref{cor bounded heat principle} hold for
            $\Box_h:=\partial_t-\Delta_{h(t)}$.
\end{corollary}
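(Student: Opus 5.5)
The plan is to reduce to Proposition~\ref{prop global maximum principle} and Corollary~\ref{cor bounded heat principle} by writing $\Box_h$ in the form $\partial_t-\Delta^{\mathbb R}_{g(t)}-W(t)\cdot\grad_{g(t)}$ with $g(t):=2h(t)$. The hypotheses to check on $[s,\tau]$ are that the metrics are complete, that $W$ is bounded, and that $\partial_tg=-2\Ric(g)+E$ with $E$ bounded.

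Let $\mathcal K$ be a compact set outside of which $\omega_0$ is K\"ahler. By \eqref{eqn preservation closedness}, $d\omega(t)=d\omega_0=0$ outside $\mathcal K$ for every $t$, so $h(t)$ is K\"ahler on $M\setminus\mathcal K$ for all $t\in[0,S]$. Completeness of $g(t)$ for $t\in[s,\tau]$ holds because $s>0$ and the flow is instantaneously complete.

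For the operator, recall that for a Hermitian metric the complex Laplacian and half the real Laplacian of $g=2h$ differ by a first-order term involving the torsion (Lee form). Concretely, $\Delta_h f=\tfrac12\Delta^{\mathbb R}_{h}f+\langle\theta,df\rangle$ for a vector field determined by $d\omega$. With the normalization $g=2h$ this reads $\Delta_h f=\Delta^{\mathbb R}_g f+W\cdot\grad_g f$, where $W(t)$ is a smooth vector field built from the torsion of $h(t)$. Since the torsion vanishes outside $\mathcal K$, $W$ is supported in $\mathcal K\times[s,\tau]$. A smooth vector field on a compact set over a compact time interval is bounded with respect to $g(t)$. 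Hence $\Box_h=\Box$ with bounded $W$.

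For the metric evolution, on $M\setminus\mathcal K$ the flow is K\"ahler-Ricci flow. There the Chern-Ricci form equals the Ricci form of the K\"ahler metric, and $\Ric(g)$ for $g=2h$ agrees with the Riemannian Ricci tensor of $h$, which corresponds to $\Ric(\omega)$ via $J$. Thus $\partial_t g=2\partial_t h=-2\Ric(g)$ outside $\mathcal K$. Setting $E:=\partial_tg+2\Ric(g)$ gives a smooth symmetric tensor supported in $\mathcal K\times[s,\tau]$, hence bounded. The only care needed is the factor bookkeeping: one must check that the identification $\partial_t\omega=-\Ric(\omega)$ gives $\partial_t h=-\Ric(h)$ as Riemannian tensors in the K\"ahler region. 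This is standard, and in any case only the vanishing of $E$ off a compact set matters, up to checking the constant.

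The main obstacle is therefore just the identification of $W$ and $E$ with quantities vanishing where $h$ is K\"ahler; smoothness up to compact time intervals gives the boundedness. With these in hand, Proposition~\ref{prop global maximum principle} and Corollary~\ref{cor bounded heat principle} apply verbatim on $[s,\tau]$.
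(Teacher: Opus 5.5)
Your proposal is correct and follows the paper's proof. With $g(t)=2h(t)$ you write $\Delta_{h(t)}=\Delta^{\mathbb R}_{g(t)}+W\cdot\grad_{g(t)}$ and $\partial_tg=-2\Ric(g)+E$, observe that $W$ and $E$ vanish wherever $\omega(t)$ is K\"ahler (by $d\omega(t)=d\omega_0$, this is the complement of a fixed compact set), and conclude that both are bounded on $M\times[s,\tau]$, exactly as the paper does. Your caution about the normalization is warranted, since the vanishing of $E$ depends on it, and it checks out: on the K\"ahler region $\partial_th=-\Ric(h)$, hence $\partial_tg=-2\Ric(g)$ there.
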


\begin{proof}
    For $g(t)=2h(t)$ as above, there are smooth tensors $E$ and vector fields
            $W$ such that
    \[
        \partial_tg=-2\Ric(g)+E,\qquad
        \Delta_{h(t)}=\Delta_{g(t)}^{\mathbb R}+W\cdot\grad_{g(t)}.
    \]
    Both $E$ and $W$ vanish where $\omega(t)$ is K\"ahler.
    Their norms are therefore bounded on $M\times[s,\tau]$, so the preceding
            results apply.
    This completes the proof.
\end{proof}

We write $\mathcal R:=\tr_{h}\Ric(\omega)$ for the Chern scalar curvature.

\begin{corollary}
    \label{cor scalar lower bound}
    Under the assumptions of Corollary~\ref{cor chern maximum principle}, the
            Chern scalar curvature $\mathcal R$ satisfies
    \[
        \mathcal R(x,t)\ge-\frac nt
                \quad\text{for all}\quad x\in M\quad\text{and}\quad 0<t\le S.
    \]
    If $\omega_0$ is complete and $\mathcal R(\cdot,0)\ge-K$ for some $K\ge0$,
            then
    \[
        \mathcal R(x,t)\ge-\frac{K}{1+Kt/n}\quad\text{for all}\quad
                x\in M\quad\text{and}\quad 0<t\le S.
    \]
\end{corollary}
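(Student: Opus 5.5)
Our plan is to derive the evolution inequality $\Box_h\mathcal R\ge \frac1n\mathcal R^2$ wherever $\mathcal R<0$ and then apply Proposition~\ref{prop global maximum principle} through Corollary~\ref{cor chern maximum principle} with $\delta\equiv 1/n$.

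First we compute the evolution of the Chern scalar curvature. Under \eqref{eqn chern flow}, write $\omega=\sqrt{-1}h_{i\bar j}dz^i\wedge d\bar z^j$. Then $\partial_t\log\det h=-\mathcal R$, so
\[
    \partial_t R_{i\bar j}=-\partial_i\partial_{\bar j}\partial_t\log\det h
    =\partial_i\partial_{\bar j}\mathcal R .
\]
Since $\partial_t h^{i\bar j}=h^{i\bar l}R_{k\bar l}h^{k\bar j}$, we obtain
\[
    \partial_t\mathcal R=\Delta_h\mathcal R+|\Ric(\omega)|_h^2 .
\]
This identity holds for the Chern-Ricci flow in general, not only where the metric is K\"ahler, because $\Ric(\omega)$ is given by the local formula $-\sqrt{-1}\partial\bar\partial\log\det h$. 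Because $\Ric(\omega)$ is a real $(1,1)$-form, Cauchy--Schwarz on its $n$ eigenvalues with respect to $h$ gives $|\Ric|_h^2\ge\mathcal R^2/n$. Hence $\Box_h\mathcal R\ge\mathcal R^2/n$ holds classically, with $\Box_h=\partial_t-\Delta_h$ as in Corollary~\ref{cor chern maximum principle}.

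Next we apply the maximum principle. Fix $0<s<\tau\le S$. By Corollary~\ref{cor chern maximum principle}, Proposition~\ref{prop global maximum principle} applies on $[s,\tau]$ to $f=\mathcal R$ with $\delta\equiv1/n$. Taking $K=+\infty$, which needs no initial lower bound, gives
\[
    \mathcal R(x,t)\ge-\frac{n}{t-s}\quad\text{for } s<t\le\tau .
\]
Letting $s\to0^+$ yields $\mathcal R\ge-n/t$ for all $0<t\le S$.

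For the second statement we need to start at $s=0$. This is the one delicate step, since Corollary~\ref{cor chern maximum principle} was stated only on intervals with $s>0$. When $\omega_0$ is complete, $h(t)$ is complete on all of $[0,S]$. The error terms $E$ and $W$ vanish outside a fixed compact set, because $d\omega(t)=d\omega_0$, and they are smooth up to $t=0$ there. So they are bounded on $M\times[0,S]$, and Proposition~\ref{prop global maximum principle} applies directly on $[0,S]$. With initial bound $\mathcal R(\cdot,0)\ge-K$ it gives
\[
    \mathcal R\ge-\frac{K}{1+Kt/n}.
\]
The case $K=0$ is covered, since the formula then gives $\mathcal R\ge0$.

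If one prefers not to invoke the $s=0$ case, an alternative is to apply the proposition on $[s,S]$ using $\mathcal R(\cdot,s)\ge-K-o(1)$. This is problematic because $\mathcal R(\cdot,s)\to\mathcal R(\cdot,0)$ is only locally uniform. For this reason we expect the direct application at $s=0$, justified by completeness of $\omega_0$ and uniform compactness of balls, to be the cleaner route.
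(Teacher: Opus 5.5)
Your proposal is correct and is essentially the paper's proof. In both, the identity $\Box_h\mathcal R=|\Ric(\omega)|_h^2\ge\mathcal R^2/n$ (which the paper quotes from the literature, whereas you derive it directly) is fed into Corollary~\ref{cor chern maximum principle} from $s>0$ with $K=+\infty$, followed by $s\to0^+$, and then into Proposition~\ref{prop global maximum principle} at $s=0$ using completeness of $\omega_0$. Your observation that $E$ and $W$ stay bounded on $M\times[0,S]$, because they vanish outside the fixed compact set where $d\omega_0\neq0$, spells out a detail the paper leaves implicit.
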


\begin{proof}
    If the flow is K\"ahler everywhere, then $g(t)=2h(t)$ is a Riemannian Ricci
            flow with real scalar curvature $\mathcal R$.
    In this case the bounds are Chen's estimate
            \cite[Corollary~2.3~(i)]{Chen09JDG}.
    For the general Chern-Ricci flow, the local evolution identity
            \cite[Lemma~3.1]{GS15PAMS} gives
    \[
        \Box_h\mathcal R=|\Ric(\omega)|_h^2
                \ge\frac1n\mathcal R^2.
    \]
    Applying Corollary~\ref{cor chern maximum principle} from $s>0$ and
            letting $s\to 0^+$, the first estimate follows. The second estimate follows from Proposition~\ref{prop global maximum principle} with $s=0$, $\delta=\frac{1}{n}$ since $\omega_0$ is complete. 
\end{proof}

We also record a local counterpart of
        Proposition~\ref{prop global maximum principle} for Ricci flow,
        following the localization arguments of Chen
        \cite[Proposition~2.1]{Chen09JDG}, which may be of independent interest.

\begin{proposition}
    \label{prop initial value localization}
    Let $g(t)$, $0\le t\le S$, be a Riemannian Ricci flow on a manifold $M$ of
            real dimension $n\ge2$.
    Fix $p\in M$, $r>0$, and $A$ satisfying
    \[
        A\ge\frac{40}{3}(n-1)\frac{S}{r^2}+2.
    \]
    Write $B_t(p,\rho):=B_{g(t)}(p,\rho)$, and suppose that, for every
            $t\in[0,S]$,
    \[
        B_t(p,Ar)\subset\subset M,\qquad
        \Ric(g(t))\le(n-1)r^{-2}g(t)
                \quad\text{on}\quad B_t(p,r).
    \]
    Let $\delta:[0,S]\to(0,+\infty)$ be continuous, and set
            $\delta_*:=\min\limits_{[0,S]}\delta>0$.
    Suppose $f\in C(M\times[0,S])$ satisfies
            $(\partial_t-\Delta_{g(t)}^{\mathbb R})f\ge\delta(t)f^2$ in the
            barrier sense wherever $f<0$ in $B_t(p,Ar)\times\{t\}$, for all
            $0<t<S$.
    If $f(\cdot,0)\ge-K$ on $B_0(p,Ar)$ for some $K\in[0,+\infty]$, then, for
            every $\varepsilon\in(0,1)$, there is $C_\varepsilon>0$
            depending only on $\varepsilon$ such that
    \begin{equation}
        \label{eqn local initial bound}
        f(x,t)\ge\min\left\{
            -\frac{K}{1+(1-\varepsilon)K\int_0^t\delta(\tau)\,d\tau},
            -\frac{C_\varepsilon}{\delta_*A^2r^2}\right\}
    \end{equation}
            for all $x\in B_t(p,3Ar/4)$ and all $0<t\le S$.
    When $K=+\infty$, the first term is
            $-((1-\varepsilon)\int_0^t\delta(\tau)\,d\tau)^{-1}$; this case
            requires no initial lower bound.
\end{proposition}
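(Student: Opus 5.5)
The plan is to reuse the cutoff comparison \eqref{eqn reusable cutoff comparison} from the proof of Proposition~\ref{prop global maximum principle}, with $s=0$, $\Box=\partial_t-\Delta^{\mathbb R}_{g(t)}$ (so $W=0$, $E=0$), and a cutoff built from the distance to $p$ whose scale is $Ar$ rather than $R\to\infty$. That argument only needed a smooth cutoff $0\le\varphi\le1$ with uniformly compact support satisfying
\[
    \Box\varphi+2\frac{|\grad_{g(t)}\varphi|^2}{\varphi}\le\frac{C_2\sqrt\varphi}{\Lambda}
\]
wherever $\varphi>0$. The conclusion then follows with $\Lambda\sim A^2r^2$, provided $\varphi\equiv1$ on $B_t(p,3Ar/4)$. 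The compact-support requirement is met because the support will lie inside $B_t(p,Ar)\subset\subset M$, and only there is the differential inequality for $f$ used. Since $u=\varphi f$ vanishes outside this set, negative minima of $u$ occur where $\varphi>0$. The initial bound $f(\cdot,0)\ge-K$ on $B_0(p,Ar)$ gives $u(\cdot,0)\ge-K$, and the case $K=+\infty$ is handled exactly as before.

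\textbf{Distance estimate.} Apply Perelman's estimate with the hypothesis $\Ric\le(n-1)r^{-2}$ on $B_t(p,r)$. The index-form computation from the previous proof, with $K_{\mathrm{loc}}=(n-1)r^{-2}$, gives
\[
    (\partial_t-\Delta)d_p\ge-\Bigl(\frac{n-1}{r}+\frac23\frac{n-1}{r}\Bigr)=-\frac53\frac{n-1}{r}
\]
wherever $d_p>r$, in the barrier sense via Calabi's trick. Set
\[
    \tilde d_p:=d_p+\frac53\frac{(n-1)t}{r}.
\]
Then $\Box\tilde d_p\ge0$ where $d_p>r$, and $|\grad\tilde d_p|\le1$. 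Take $\varphi:=\eta(\tilde d_p/(Ar))$ with the cutoff $\eta$ from before, equal to $1$ on $(-\infty,7/8]$.

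\textbf{Checking the cutoff.} There are three things to verify.
\begin{itemize}
\item The transition region $\tilde d_p\ge\frac78Ar$ lies in $\{d_p>r\}$. This holds because $\frac53(n-1)S/r\le\frac18Ar$, which is exactly where the constraint $A\ge\frac{40}{3}(n-1)S/r^2$ enters: $\frac{5}{3}\cdot 8=\frac{40}{3}$. The extra $+2$ ensures $\frac78Ar-\frac18Ar>r$.
\item On $B_t(p,3Ar/4)$ we have $\tilde d_p\le\frac34Ar+\frac18Ar=\frac78Ar$, so $\varphi=1$ there.
\item The support satisfies $\tilde d_p\le Ar$, so $d_p\le Ar$ and the support lies in $\overline{B_t(p,Ar)}$. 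To keep it strictly inside, use a slightly smaller scale or note that $\eta$ vanishes near $1$.
\end{itemize}
The computation $\Box\varphi=\eta'\,\Box\tilde d_p/(Ar)-\eta''|\grad\tilde d_p|^2/(Ar)^2$, together with $\eta'\le0$ and $\Box\tilde d_p\ge0$, gives the required inequality with $C_2=C$ universal and $\Lambda=A^2r^2$.

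\textbf{Conclusion.} Substituting into \eqref{eqn reusable cutoff comparison} gives, on $B_t(p,3Ar/4)$,
\[
    f=\varphi f\ge\min\Bigl\{-\frac{K}{1+(1-\varepsilon)K\int_0^t\delta},\;-\frac{C}{\varepsilon\delta_*A^2r^2}\Bigr\},
\]
so $C_\varepsilon=C/\varepsilon$. The endpoint $t=S$ follows by continuity. The main obstacle is the bookkeeping in the cutoff check: confirming that the support stays inside the region where the barrier hypothesis holds, and that the drift of $\tilde d_p$ does not push the transition region into $B_t(p,r)$, where the Laplacian comparison for $d_p$ fails. The numerical constraint on $A$ is tuned precisely for this, so I would first verify those inequalities carefully.
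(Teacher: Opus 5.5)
Your proposal is correct and is essentially the paper's proof. The paper also takes $\varphi=\eta\bigl((d_p+ct)/(Ar)\bigr)$ with $c=5(n-1)/(3r)$ from Perelman's distance estimate, checks that the transition region lies in $\{r<d_p<Ar\}$ and that $\varphi\equiv1$ on $B_t(p,3Ar/4)$, and then applies \eqref{eqn reusable cutoff comparison} with $s=0$, $\Lambda=A^2r^2$ and $C_\varepsilon=C_2/\varepsilon$; your worry about keeping the support strictly inside $B_t(p,Ar)$ is unnecessary, since $\{\varphi>0\}\subset\{\tilde d_p<Ar\}\subset B_t(p,Ar)$ already.
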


\begin{proof}
    Put $d_p(\cdot,t):=d_{g(t)}(\cdot,p)$, $c:=5(n-1)/(3r)$, and
            $\Box:=\partial_t-\Delta_{g(t)}^{\mathbb R}$.
    Perelman's distance estimate \cite[Lemma~8.3(a)]{Perelman02arXiv} gives
    \[
        \Box d_p\ge-c\quad\text{wherever}\quad r<d_p<Ar.
    \]
    For the cutoff $\eta$ used in the proof of
            Proposition~\ref{prop global maximum principle}, set
    \[
        \varphi(x,t):=\eta\left(\frac{d_p(x,t)+ct}{Ar}\right).
    \]
    Thus $\varphi$ has uniformly compact support, equals one on $B_t(p,3Ar/4)$,
            and vanishes outside $B_t(p,Ar)$, while its transition region lies
            where $r<d_p<Ar$.
    Consequently,
    \[
        \Box\varphi+2\frac{|\grad_{g(t)}\varphi|_{g(t)}^2}{\varphi}
                \le\frac{C_2\sqrt\varphi}{A^2r^2}
                \quad\text{wherever}\quad\varphi>0,
    \]
            for a universal constant $C_2>0$.
    The inequality \eqref{eqn reusable cutoff comparison} in the proof of
            Proposition~\ref{prop global maximum principle} applies with $s=0$
            and $\Lambda=A^2r^2$.
    It gives \eqref{eqn local initial bound} for $0<t<S$, with
            $C_\varepsilon:=C_2/\varepsilon$.
    The case $t=S$ follows by continuity.
\end{proof}

\section{Potential estimates and strong uniqueness}
\label{sec potentials}

In this section, we establish  upper and lower bounds  for the canonical potential and then prove our main strong uniqueness result for instantaneously complete Chern-Ricci flow, namely Theorem~\ref{thm main}. Here and in the remaining sections, $n$ again denotes the complex dimension. 


For a solution $\omega(t)$ to the Chern-Ricci flow \eqref{eqn chern flow} and
        a fixed reference Hermitian metric $\hat\omega$, define
\begin{equation}
    \label{eqn canonical potential}
    \psi(x,t):=\int_0^t\log
            \frac{\omega(\tau)^n}{\hat\omega^n}(x)\,d\tau
    \quad\text{and}\quad \alpha_t:=\omega_0-t\Ric(\hat\omega).
\end{equation}
The volume ratio is globally defined and smooth up to $t=0$ on compact subsets.
Since
\[
    \sqrt{-1}\partial\bar\partial
            \log\frac{\omega(t)^n}{\hat\omega^n}
    =-\Ric(\omega(t))+\Ric(\hat\omega),
\]
        integration of the flow equation gives
\begin{equation}
    \label{eqn potential equation}
    \omega(t)=\alpha_t+\sqrt{-1}\partial\bar\partial\psi.
\end{equation}
Direct differentiation shows
\begin{equation}
    \label{eqn potential time derivatives}
    \partial_t\psi=\log\frac{\omega(t)^n}{\hat\omega^n}
            \quad\text{and}\quad\partial_t^2\psi=-\mathcal R.
\end{equation}

We first obtain an upper bound for the potential using the fixed metric $\hat\omega$.
This estimate does not require completeness of the flow. Earlier relevant
proofs of upper bound estimates for the potential use a spacetime maximum of the potential minus a time barrier and a proper exhaustion function, see \cite{TW15JDG, HLT19CVPDE, LT20JDG, ChauLee20}. 
Our proof instead converts the problem into a nonlinear elliptic inequality on the fixed reference metric.
This permits an upper estimate without an a priori global bound or growth restriction on the potential.

\begin{lemma}
    \label{lem upper potential}
    Let $M$ be an $n$-dimensional complex manifold and $\hat\omega$ be a complete Hermitian metric on $M$ which is K\"ahler outside a
            compact set, and suppose that $\Ric(\hat\omega)\ge-K\hat\omega$ on $M$ for
            some $K\ge0$.
    Let $\omega(t)$ be a smooth Chern-Ricci flow on $M\times[0,S]$ with
            $\omega_0\le C\hat\omega$ for some $C\ge1$.
    Then the potential in \eqref{eqn canonical potential} satisfies
    \begin{equation}
        \psi(x,t)\le n\int_0^t\log(C+K\tau)\,d\tau
                \quad\text{for all}\quad x\in M\quad\text{and}\quad 0\le t\le S.
    \end{equation}
\end{lemma}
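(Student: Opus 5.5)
The plan is to turn the parabolic inequality coming from the flow into an elliptic inequality for the fixed complete metric $\hat\omega$, and then apply a maximum principle on $(M,\hat\omega)$. No global bound on $\psi$ is available, so the argument has to rely only on completeness of $\hat\omega$ and the lower bound $\Ric(\hat\omega)\ge-K\hat\omega$.

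\emph{Step 1: a pointwise trace inequality.} By \eqref{eqn potential equation}, for every $\tau\in[0,S]$,
\[
    \tr_{\hat\omega}\omega(\tau)=\tr_{\hat\omega}\alpha_\tau+\hat\Delta\psi(\tau),
\]
where $\hat\Delta:=\tr_{\hat\omega}\sqrt{-1}\partial\bar\partial$. The hypotheses $\omega_0\le C\hat\omega$ and $-\tau\Ric(\hat\omega)\le K\tau\hat\omega$ give $\tr_{\hat\omega}\alpha_\tau\le n(C+K\tau)$. The arithmetic--geometric mean inequality gives $\tr_{\hat\omega}\omega(\tau)\ge n(\omega(\tau)^n/\hat\omega^n)^{1/n}=n\,e^{\partial_\tau\psi/n}$. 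Combining these,
\[
    n\,e^{\partial_\tau\psi/n}\le n(C+K\tau)+\hat\Delta\psi(\tau).
\]
Equivalently, by concavity of $\log$ about the point $C+K\tau$,
\[
    \partial_\tau\psi\le n\log(C+K\tau)+\frac{\hat\Delta\psi(\tau)}{C+K\tau}.
\]

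\emph{Step 2: integrate in time.} Fix $t$ and multiply the last inequality by a suitable weight. The choice I will try first is $C+K\tau$, which removes the denominator, followed by integration over $[0,t]$ using $\psi(\cdot,0)=0$. Integrating by parts in $\tau$ collects the terms into
\[
    v(x):=(C+Kt)\psi(x,t)-K\int_0^t\psi\,d\tau-(C+Kt)\,n\int_0^t\log(C+K\tau)\,d\tau+(\text{explicit }x\text{-independent terms}),
\]
together with $\hat\Delta$ applied to a time integral of $\psi$. The expected outcome is an inequality of the form
\[
    \hat\Delta\Psi\ge G(\Psi),\qquad G(y)>0\ \text{for } y>0,\ G\ \text{superlinear},
\]
for a single function $\Psi$ whose positivity is equivalent to a violation of the claimed bound. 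In the simplest case $K=0$ one has $\Psi=\int_0^t\psi\,d\tau$ and the exponential form of Step 1. Applying Jensen's inequality in time to $\int_0^t e^{\partial_\tau\psi/n}\,d\tau$ should produce
\[
    \hat\Delta\Psi\ge n t\,e^{\psi(t)/(nt)}-nCt,
\]
and the same integration is then to be redone keeping the $\tau$-dependence of $\psi$. The nonnegative function the maximum principle will act on is the positive part of the resulting combination.

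\emph{Step 3: elliptic maximum principle (main obstacle).} The difficulty is to obtain a closed inequality for one function that forces it to be nonpositive with no growth assumption on it. To handle this I will use a Keller--Osserman type argument. Since $\hat\omega$ is complete and Kähler outside a compact set, $\hat\Delta$ equals the real Laplacian of $2\hat h$ plus a drift that is bounded and compactly supported. With $\Ric(\hat\omega)\ge-K\hat\omega$ this gives the Laplacian comparison $\hat\Delta\hat d\le C'(1+1/\hat d)$ away from a compact set, using Calabi's trick exactly as in the proof of Proposition~\ref{prop global maximum principle}. I then multiply the nonnegative function by the cutoff $\eta(\hat d/R)$, with $\eta$ the cutoff constructed there, and examine an interior maximum. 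The superlinear (exponential) right-hand side absorbs the cutoff errors of size $R^{-1}+R^{-2}$, exactly as the quadratic term did in \eqref{eqn reusable cutoff comparison}. Letting $R\to\infty$ shows the function is nonpositive, i.e.
\[
    \psi(x,t)\le n\int_0^t\log(C+K\tau)\,d\tau.
\]

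I expect the choice of weight and combination in Step 2 to be the delicate point. Jensen's inequality, applied pointwise in $x$, has to produce a bound involving $\psi(t)$ itself rather than only its time average, since otherwise the elliptic inequality is not closed.
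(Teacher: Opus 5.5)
Your overall strategy is the paper's: trace inequality against $\hat\omega$, time integration to an elliptic inequality, then a cutoff/Keller--Osserman argument on the complete metric. Your Step 3 is fine. But Step 2, which you leave as an expectation, is where the proof lives, and as proposed it does not close.

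\emph{Why Step 2 fails.} Your $K=0$ computation gives $\hat\Delta\int_0^t\psi\,d\tau\ge nt\,e^{\psi(t)/(nt)}-nCt$. The right side involves the endpoint value $\psi(t)$, not $\int_0^t\psi\,d\tau$. Since $\psi$ has no monotonicity in time, there is no pointwise comparison between the two, so you never obtain $\hat\Delta\Psi\ge G(\Psi)$ for a single function. Your final remark has this backwards: a Jensen bound in terms of $\psi(t)$ is exactly what leaves the inequality open.

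The weight-$(C+K\tau)$ variant is worse. It rests on the linearized inequality, which is a consequence of Step 1, not equivalent to it. That inequality only says $v:=\psi-n\int_0^t\log(C+K\tau)\,d\tau$ is a subsolution of $(C+K\tau)\partial_\tau v=\hat\Delta v$ with $v(\cdot,0)=0$. Tychonoff's nonzero solution with zero data, on $\mathbb C^n$ with flat $\hat\omega$, also satisfies this, so no argument using only the linear inequality can give $v\le0$. Taking a positive part \emph{after} integrating in time does not help either, because the endpoint term can be negative.

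\emph{What the paper does instead.} It uses two devices you are missing: truncate first, then integrate against an exponential weight. Start from $\hat\Delta v\ge na(e^{\partial_t v/n}-1)$ with $a:=C+Kt$ and $v(\cdot,0)=0$. Set $\Phi:=\varphi(v)$, where $\varphi$ is smooth and convex, zero on $(-\infty,0]$, positive on $(0,+\infty)$, with $0\le\varphi'\le1$. The inequality $\lambda(e^x-1)\ge e^{\lambda x}-1$ for $0\le\lambda\le1$ gives $\hat\Delta\Phi\ge na(e^{\partial_t\Phi/n}-1)$ with $\Phi\ge0$.

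For fixed $\tau$, put $\Psi:=\int_0^\tau e^{-t}\Phi/a\,dt$ and $c_\tau:=1-e^{-\tau}$. The weight turns the time derivative into the function itself:
\[
\int_0^\tau e^{-t}\partial_t\Phi\,dt=e^{-\tau}\Phi(\cdot,\tau)+\int_0^\tau e^{-t}\Phi\,dt\ge\Psi .
\]
Discarding the endpoint term and the factor $1/a\le1$ uses precisely $\Phi\ge0$. Jensen's inequality for the probability measure $e^{-t}dt/c_\tau$ then gives the closed inequality $\hat\Delta\Psi\ge nc_\tau(e^{\Psi/(nc_\tau)}-1)\ge\Psi^2/(2nc_\tau)$. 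Your Step 3 applies to this verbatim, giving $\Psi\equiv0$, hence $\Phi\equiv0$ and $v\le0$ on $M\times[0,\tau]$.
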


\begin{proof}
    Let $\hat h$ be the Hermitian metric associated with $\hat\omega$.
    Put $a(t):=C+Kt$.
    Since $\alpha_t\le a(t)\hat\omega$, taking the $\hat h$-trace in
            \eqref{eqn potential equation} and using the arithmetic-geometric
            mean inequality, we obtain
    \[
        \Delta_{\hat h}\psi
                =\tr_{\hat h}\omega-\tr_{\hat h}\alpha_t
                \ge n\left(\frac{\omega^n}{\hat\omega^n}\right)^{\frac1n}-na
                =ne^{\partial_t\psi/n}-na.
    \]
    Set $v:=\psi-n\int_0^t\log a(\tau)\,d\tau$.
    Then
    \begin{equation}
        \label{eqn reference trace inequality}
        \Delta_{\hat h} v\ge na(e^{\partial_t v/n}-1)
                \quad\text{and}\quad v(\cdot,0)=0.
    \end{equation}

    Choose a smooth convex function
            $\varphi$ which vanishes on $(-\infty,0]$, is positive on
            $(0,+\infty)$, and satisfies $0\le\varphi'\le1$.
    Set $\Phi:=\varphi(v)$.
    By \eqref{eqn reference trace inequality} and convexity of $\varphi$ and
            the exponential,
    \begin{align*}
        \Delta_{\hat h}\Phi
        &=\varphi'\Delta_{\hat h}v
                +\varphi''|\partial v|_{\hat h}^2 \\
        &\ge na\varphi'(e^{\partial_t v/n}-1)
                \ge na(e^{\partial_t\Phi/n}-1),
    \end{align*}
            where the last inequality uses $\lambda(e^x-1)\ge e^{\lambda x}-1$
            for $0\le\lambda\le1$.

    Fix $0<\tau\le S$ and define
    \[
        \Psi(x):=\int_0^\tau e^{-t}\frac{\Phi(x,t)}{a(t)}\,dt
        \quad\text{and}\quad c_\tau:=\int_0^\tau e^{-t}\,dt=1-e^{-\tau}.
    \]
    Since $\Phi(\cdot,0)=0$ and $a\ge1$,
    \[
        \int_0^\tau e^{-t}\partial_t\Phi\,dt
        =e^{-\tau}\Phi(\cdot,\tau)+\int_0^\tau e^{-t}\Phi\,dt\ge \Psi.
    \]
    By Jensen's inequality, we have
    \begin{equation}
        \label{eqn integrated trace estimate}
        \begin{aligned}
            \Delta_{\hat h}\Psi
            &\ge n\int_0^\tau e^{-t}(e^{\partial_t\Phi/n}-1)\,dt \\
            &\ge nc_\tau\left[\exp\left(\frac1{nc_\tau}
                    \int_0^\tau e^{-t}\partial_t\Phi\,dt\right)-1\right] \\
            &\ge nc_\tau(e^{\Psi/(nc_\tau)}-1)\ge\frac{\Psi^2}{2nc_\tau},
        \end{aligned}
    \end{equation}
    where we used $\Psi\ge0$ in the last step.

    It remains to show that a nonnegative $\Psi$ satisfying
            \eqref{eqn integrated trace estimate} must vanish.
    By the K\"ahler condition and the bound for $\Ric(\hat\omega)$ outside
            a compact set, the real metric associated to $\hat h$ has Ricci
            curvature bounded below.
    Moreover, $\Delta_{\hat h}$ differs from one half of its real Laplacian by a
            bounded vector field supported in that compact set.
    In the remainder of this proof, $\bar C>0$ denotes various constants
            depending only on $n$, the lower bound for this real Ricci curvature
            and the upper bound for the norm of the drift.
    Fix $p\in M$.
    For any $R>0$, Laplacian comparison gives a distance cutoff
            $0\le\eta_R\le1$, equal to one on $B_{\hat h}(p,R)$ and zero
            outside $B_{\hat h}(p,2R)$ such that
    \[
        \Delta_{\hat h}\eta_R\ge-\bar C(R^{-2}+R^{-1})
        \quad\text{and}\quad
        \frac{|\partial\eta_R|_{\hat h}^2}{\eta_R}\le\bar CR^{-2}.
    \]
    At a positive maximum of $\eta_R\Psi$, using Calabi's trick if necessary,
            we obtain
    \[
        0\ge\eta_R\Delta_{\hat h} \Psi+\Psi\Delta_{\hat h}\eta_R
                -2\Psi\frac{|\partial\eta_R|_{\hat h}^2}{\eta_R}
        \ge\frac{\eta_R \Psi^2}{2nc_\tau}
                -\bar C(R^{-2}+R^{-1})\Psi.
    \]
    Since $c_\tau\le1$, this implies $\eta_R \Psi\le \bar C(R^{-2}+R^{-1})$.
    Letting $R\to+\infty$ gives $\Psi=0$ on $M$.
    Then $\Phi=0$ and hence $v\le0$ on $M\times[0,\tau]$.
    Since $\tau$ is arbitrary, the result follows.
\end{proof}

For the lower bound we use the positivity condition
        \eqref{eqn reference positivity} and completeness of the flow at
        positive times.
The scalar-curvature estimate controls the time derivative of $\psi/t$ and
        allows the trace inequality to be expressed in terms of the potential
        itself.

\begin{lemma}
    \label{lem lower potential}
    Under the assumptions of Theorem~\ref{thm main}, let $\omega(t)$ be a
            solution to instantaneously complete Chern-Ricci flow on $M\times[0,S]$ with  initial data $\omega_0$ and define $\psi$ by \eqref{eqn canonical potential}.
    Then, for all $x\in M$ and $0<t\le\min\{S,s\}$,
    \[
        \psi(x,t)\ge nt\left(\log\frac{\beta t}{s}-1\right)
                -\frac ts\osc\limits_M u.
    \]
\end{lemma}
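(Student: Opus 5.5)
The plan is to derive a parabolic differential inequality for a suitably normalized potential from the positivity condition \eqref{eqn reference positivity}, and then apply the maximum principles of Section~\ref{sec maximum principle}, via Corollary~\ref{cor chern maximum principle}, on intervals $[\varepsilon,\tau]$ with $\varepsilon>0$. The obstacle will be that $\psi$ has no a priori lower bound at positive times.

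\emph{Step 1 (trace inequality).} For $0<t\le s$, write $\alpha_t=(1-t/s)\omega_0+(t/s)(\omega_0-s\Ric(\hat\omega))$. By \eqref{eqn reference positivity} this gives $\alpha_t\ge (t/s)\beta\hat\omega-(t/s)\sqrt{-1}\partial\bar\partial u$, where the first term $(1-t/s)\omega_0$ was dropped because it is nonnegative. Put $\phi:=\psi-(t/s)u$. Then \eqref{eqn potential equation} gives $\omega(t)\ge(\beta t/s)\hat\omega+\sqrt{-1}\partial\bar\partial\phi$. Taking $\tr_{h(t)}$ and using the AM--GM inequality $\tr_h\hat\omega\ge n(\hat\omega^n/\omega^n)^{1/n}=ne^{-\partial_t\psi/n}$, we get
\[
\Delta_{h}\phi\le n-\frac{n\beta t}{s}e^{-\partial_t\psi/n}.
\]
Since $\partial_t\phi=\partial_t\psi-u/s$, the operator $\Box_h:=\partial_t-\Delta_{h(t)}$ satisfies $\Box_h\phi\ge x+Ae^{-x/n}-n-u/s$, where $x=\partial_t\psi$ and $A=n\beta t/s$. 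Minimizing over $x\in\mathbb R$ (the minimum is attained at $e^{-x/n}=n/A$) yields $\Box_h\phi\ge n\log(\beta t/s)-u/s$. Set
\[
f:=\psi-\frac ts(u-\sup_M u)-nt\Big(\log\frac{\beta t}{s}-1\Big).
\]
Because $\frac{d}{dt}\big[nt(\log(\beta t/s)-1)\big]=n\log(\beta t/s)$, we obtain $\Box_hf\ge(\sup u-u)/s\ge0$. Moreover $f\to0$ as $t\to0^+$, locally uniformly, since $u$ is bounded and $t\log t\to0$. If we can show $f\ge0$, the claimed bound follows, since $u-\sup u\ge-\osc u$.

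\emph{Step 2 (the obstacle: no lower bound on $f$).} Corollary~\ref{cor bounded heat principle} needs $f$ to be bounded below, and the convergence $f(\cdot,\varepsilon)\to0$ is not known to be uniform. I would first upgrade $\Box_hf\ge0$ to $\Box_hf\ge\delta(t)f^2$ wherever $f$ is very negative, and this is where the scalar curvature estimate enters. By \eqref{eqn potential time derivatives} and Corollary~\ref{cor scalar lower bound}, $\partial_t(t\partial_t\psi-\psi)=-t\mathcal R\le n$, so $t\partial_t\psi-\psi\le nt$, that is, $\partial_t\psi\le\psi/t+n$. Where $f$ is very negative, $\psi/t$ is very negative, because $u$ is bounded. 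Hence $x=\partial_t\psi$ lies far to the left of the minimizer, and the slack in $x+Ae^{-x/n}-\min$ is at least of order $Ae^{-x/n}\gtrsim t\,e^{|f|/(nt)}$. This dominates $\delta(t)f^2$ for a suitable continuous $\delta>0$ on $[\varepsilon,\tau]$. Proposition~\ref{prop global maximum principle} with $K=+\infty$, applied to $f$ plus a small constant, then gives a lower bound on $[2\varepsilon,\tau]$ that needs no initial information. So $f$ is bounded below there, and Corollary~\ref{cor bounded heat principle} applies.

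\emph{Step 3 (letting $\varepsilon\to0^+$).} It then remains to show $\inf_M f(\cdot,\varepsilon)\to0$ as $\varepsilon\to0^+$. I would first try to get this from the same quadratic inequality, with $K=+\infty$, on $[\varepsilon/2,\varepsilon]$. Since $\delta(t)$ can be taken large, roughly like $t^{-1}e^{c/t}$ near $0$, the resulting bound $-(\int_{\varepsilon/2}^{\varepsilon}\delta)^{-1}$ tends to $0$. If that does not work directly, I would run the comparison on the shifted function $f+\eta$ with $\eta>0$, using the barrier from Step 2 to keep the negative part quantitatively small, and then let $\eta\to0^+$. Making this limiting step rigorous is the point I expect to require the most care.
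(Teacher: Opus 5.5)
Your proposal is correct and follows the paper's argument. Your $f$ is exactly $t\Psi$ for the paper's $\Psi=\tilde\psi/t+n-n\log(\beta t/s)$. The same ingredients (trace plus AM--GM, the bound $t\partial_t\psi-\psi\le nt$ from $\mathcal R\ge-n/t$, and monotonicity of $x\mapsto x+(n\beta t/s)e^{-x/n}$ to the left of its minimizer) give $\Box_h f\ge f^2/(2nt^2)$ wherever $f<0$, not only where $f$ is very negative; the paper writes this as $\Box_h\Psi\ge\Psi^2/(2nt)$. Since this $\delta$ is not integrable at $t=0$, one application of Proposition~\ref{prop global maximum principle} with $K=+\infty$ on $[\varepsilon,t]$, followed by $\varepsilon\to0^+$, already gives $f\ge0$. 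So the detour through Corollary~\ref{cor bounded heat principle} is unnecessary, and the first option in your Step~3 works as stated, since $\int_{\varepsilon/2}^{\varepsilon}\delta=1/(2n\varepsilon)$. One correction: the growth $t^{-1}e^{c/t}$ you mention is not available for the unshifted $f$, but it is not needed.
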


\begin{proof}
    Let $h(t)$ be the Hermitian metric associated with $\omega(t)$.
    We may assume $\sup\limits_M u=0$ and hence $-\osc\limits_M u\le u\le0$, since adding
            a constant to $u$ does not change \eqref{eqn reference positivity}
            or the asserted estimate.
    Set
    \[
        \tilde\psi:=\psi-\frac ts u\quad\text{and}\quad
                \tilde\alpha_t:=\alpha_t
                +\frac ts\sqrt{-1}\partial\bar\partial u.
    \]
    Since $t\le s$, the positivity assumption \eqref{eqn reference positivity}
            gives
    \begin{equation}
        \label{eqn positive reference forms}
        \tilde\alpha_t=\left(1-\frac ts\right)\omega_0
                +\frac ts\bigl(\omega_0-s\Ric(\hat\omega)
                +\sqrt{-1}\partial\bar\partial u\bigr)
                \ge\frac{\beta t}{s} \hat\omega.
    \end{equation}
    By \eqref{eqn potential time derivatives} and
            Corollary~\ref{cor scalar lower bound},
            $\partial_t^2\tilde\psi=\partial_t^2\psi\le n/t$.
    Since $\tilde\psi(\cdot,0)=0$, we have
    \begin{equation}
        \label{eqn potential time inequality}
        t\partial_t\tilde\psi-\tilde\psi
                =\int_0^t \tau\partial_\tau^2\tilde\psi\,d\tau\le nt.
    \end{equation}
    Taking the $h(t)$-trace in \eqref{eqn positive reference forms}
            and using $u\le0$, we obtain
    \begin{align}
        \tr_h\tilde\alpha_t
                &\ge\frac{\beta t}{s}\tr_h\hat\omega
                \ge\frac{n\beta t}{s} e^{-\partial_t\psi/n}
                \ge\frac{n\beta t}{s} e^{-\partial_t\tilde\psi/n},
                \label{eqn lower trace inequality} \\
        \text{and}\quad\Box_h\left(\frac{\tilde\psi}{t}\right)
                &=\frac1t\left(\partial_t\tilde\psi-\frac{\tilde\psi}{t}
                -n+\tr_h\tilde\alpha_t\right).
                \label{eqn divided potential equation}
    \end{align}

    Put
    \[
        \Psi:=\frac{\tilde\psi}{t}+n-n\log\frac{\beta t}{s}.
    \]
    Where $\Psi<0$, inequality~\eqref{eqn potential time inequality} implies
    \[
        \partial_t\tilde\psi\le\frac{\tilde\psi}{t}+n
                =n\log\frac{\beta t}{s}+\Psi
                <n\log\frac{\beta t}{s},
    \]
            and hence
    \begin{align*}
        t\Box_h\Psi&\ge\partial_t\tilde\psi-\frac{\tilde\psi}{t}-2n
                +\frac{n\beta t}{s} e^{-\partial_t\tilde\psi/n} \\
        &\ge-n+\frac{n\beta t}{s}\exp\left(-\frac1n
                \left(\frac{\tilde\psi}{t}+n\right)\right) \\
        &=n(e^{-\Psi/n}-1),
    \end{align*}
            where we used \eqref{eqn lower trace inequality},
            \eqref{eqn divided potential equation}, and the fact that
            $x\mapsto x+(n\beta t/s)e^{-x/n}$ is decreasing for
            $x<n\log(\beta t/s)$.
    It follows that
    \[
        \Box_h\Psi\ge\frac{\Psi^2}{2nt}\quad\text{wherever}\quad \Psi<0.
    \]

    For any $0<\varepsilon<t$, apply Corollary~\ref{cor chern maximum principle}
            on $[\varepsilon,t]$ to obtain
    \[
        \Psi(x,t)\ge-\left(
                \int_\varepsilon^t\frac{d\tau}{2n\tau}\right)^{-1}
                =-\frac{2n}{\log(t/\varepsilon)}.
    \]
    Letting $\varepsilon\to 0^+$ gives $\Psi(x,t)\ge0$ for every fixed
            $t>0$.
    Therefore
    \[
        \psi=\tilde\psi+\frac ts u
        \ge nt\left(\log\frac{\beta t}{s}-1\right)
                -\frac ts\operatorname{osc}_M u,
    \]
            as required.
\end{proof}

\begin{proof}[Proof of Theorem~\ref{thm main}]
    We may assume $S\le s$, since we can restrict the flows to $[0,\min\{S,s\}]$
            if necessary.
    For $0<t\le S$, let $h_i(t)$ be the metrics associated to
            $\omega_i(t)$ and $\psi_i$ be the potentials relative to
            $\hat\omega$ as in \eqref{eqn canonical potential}, for $i=1,2$.
    Lemmas~\ref{lem upper potential} and~\ref{lem lower potential}
            give upper and lower bounds for both potentials, uniform on $M$,
            whose difference is
    \[
        b(t):=n\int_0^t\log(C+K\tau)\,d\tau
                -nt\left(\log\frac{\beta t}{s}-1\right)
                +\frac ts\operatorname{osc}_M u.
    \]
    Thus $|\psi_1-\psi_2|\le b(t)$, and $b$ extends continuously to $[0,S]$ with
            $b(0)=0$.

    Set $\Psi:=\psi_1-\psi_2$.
    By \eqref{eqn potential equation} and
            \eqref{eqn potential time derivatives}, we have
    \[
        \sqrt{-1}\partial\bar\partial \Psi=\omega_1-\omega_2
        \quad\text{and}\quad
        \partial_t\Psi=\log\frac{\omega_1^n}{\omega_2^n}.
    \]
    Taking the $h_1(t)$-trace and applying the arithmetic-geometric mean
            inequality, we obtain
    \[
        \Box_{h_1}\Psi=\partial_t\Psi-n+\tr_{h_1}\omega_2
                \ge\partial_t\Psi+n(e^{-\partial_t\Psi/n}-1)\ge0.
    \]
    Interchanging the solutions gives $\Box_{h_2}(-\Psi)\ge0$.

    Now fix any $0<\varepsilon<S$.
    For $t\in[\varepsilon,S]$, the functions $\Psi+b(\varepsilon)$ and
            $-\Psi+b(\varepsilon)$ are bounded below and are nonnegative
            at time $\varepsilon$.
    Then Corollary~\ref{cor chern maximum principle} gives
    \[
        |\Psi(x,t)|\le b(\varepsilon)
                \quad\text{for all}\quad x\in M
                \quad\text{and}\quad \varepsilon\le t\le S.
    \]
    Letting $\varepsilon\to 0^+$ proves $\Psi=0$ on $M\times(0,S]$, and
            \eqref{eqn potential equation} gives $\omega_1\equiv\omega_2$.
    The common initial condition then completes the proof.
\end{proof}

\section{Consequences and examples}
\label{sec consequences}

In this section, we present applications of Theorem~\ref{thm main}. We first apply Theorem~\ref{thm main} to complete initial metrics, for which the
        initial metric itself is used as the reference metric.

\begin{proof}[Proof of Corollary~\ref{cor complete initial}]
    For any $s>0$ with $sK^+<1$, take $\hat\omega=\omega_0$, $C=1$, $K=K^-$,
            $u=0$, and $\beta=1-sK^+$ in Theorem~\ref{thm main}.
\end{proof}

\begin{corollary}
    Let $\omega_0$ be a complete Hermitian metric, K\"ahler outside a compact
            set, with $\Ric(\omega_0)=0$.
    Every smooth instantaneously complete Chern-Ricci flow from $\omega_0$ is
            stationary throughout its existence interval.
\end{corollary}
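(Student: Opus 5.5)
The plan is to compare an arbitrary smooth instantaneously complete flow $\omega(t)$ from $\omega_0$ with the stationary flow $\omega(t)\equiv\omega_0$. The stationary flow is itself a smooth Chern-Ricci flow, since $\Ric(\omega_0)=0$. It is instantaneously complete because $\omega_0$ is complete. Then Corollary~\ref{cor complete initial} applies with $K^-=K^+=0$, because the hypothesis \eqref{eqn initial ricci bounds} holds trivially when $\Ric(\omega_0)=0$. Since $K^+=0$, the condition $K^+t<1$ holds for every $t$.

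Concretely, suppose $\omega(t)$ is defined on $M\times[0,S]$ for some $S>0$. I would set $\omega_1(t):=\omega(t)$ and $\omega_2(t):=\omega_0$ on $[0,S]$. In Theorem~\ref{thm main} I take $\hat\omega=\omega_0$, $C=1$, $K=0$, $u=0$, and $\beta=1$. I can also choose $s=S$, because the positivity condition \eqref{eqn reference positivity} reads $\omega_0-s\cdot0\ge\omega_0$ for every $s>0$. The theorem then gives $\omega(t)\equiv\omega_0$ for all $0\le t\le\min\{S,s\}=S$.

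If the existence interval is not compact, for instance $[0,T)$, I would apply this argument on each $[0,S]$ with $S<T$. That gives stationarity on the whole interval.

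There is essentially no obstacle here. The only points to check are that the constant flow qualifies as a smooth instantaneously complete solution with the same initial data, and that the hypotheses of Corollary~\ref{cor complete initial}, namely completeness and being Kähler outside a compact set, are exactly the given assumptions on $\omega_0$.
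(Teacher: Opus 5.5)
Your proposal is correct and is exactly the paper's argument: compare the flow with the stationary complete flow via Corollary~\ref{cor complete initial} with $K^-=K^+=0$, which amounts to Theorem~\ref{thm main} with $\hat\omega=\omega_0$, $C=1$, $K=0$, $u=0$, $\beta=1$ and arbitrary $s$. Your explicit check of the hypotheses and your exhaustion of a noncompact existence interval by compact subintervals are details the paper leaves implicit.
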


\begin{proof}
    Compare the solution with the complete stationary flow using
            Corollary~\ref{cor complete initial} with $K^-=K^+=0$.
\end{proof}

In particular, the conclusion holds for every complete Ricci-flat K\"ahler
        initial metric.
More generally, if the initial metric is uniformly equivalent to a complete
        Ricci-flat K\"ahler metric, the positivity condition holds for every
        $s>0$.

\begin{corollary}
    \label{cor ricci flat reference}
    Let $\hat\omega$ be a complete Ricci-flat K\"ahler metric on $M$.
    Let $\omega_0$ be a Hermitian metric which is K\"ahler outside a compact
            set, and suppose that
    \[
        C^{-1}\hat\omega\le\omega_0\le C\hat\omega
    \]
            for some $C\ge1$.
    Then any two smooth instantaneously complete Chern-Ricci flows from
            $\omega_0$ agree throughout their common existence interval.
\end{corollary}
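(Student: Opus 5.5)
The plan is to apply Theorem~\ref{thm main} with the given complete Ricci-flat K\"ahler metric $\hat\omega$ as the reference metric, for every $s>0$. Then let $s\to+\infty$, so that the two flows agree on their whole common existence interval $[0,S]$.

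First we check the hypotheses that do not depend on $s$. By assumption, $\hat\omega$ is complete and K\"ahler, so in particular it is K\"ahler outside a compact set. Since $\Ric(\hat\omega)=0$, the curvature condition \eqref{eqn reference curvature bound} holds with $K=0$. The upper bound $\omega_0\le C\hat\omega$ is exactly \eqref{eqn reference bounds}. We are also told that $\omega_0$ is K\"ahler outside a compact set.

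Next, the positivity condition \eqref{eqn reference positivity}. Because $\Ric(\hat\omega)=0$, we have
\[
    \omega_0-s\Ric(\hat\omega)+\sqrt{-1}\partial\bar\partial u
    =\omega_0+\sqrt{-1}\partial\bar\partial u.
\]
Take $u\equiv0$, which is certainly smooth and bounded. The lower bound $\omega_0\ge C^{-1}\hat\omega$ then gives \eqref{eqn reference positivity} with $\beta=C^{-1}$, for every $s>0$.

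Now let $\omega_1(t)$ and $\omega_2(t)$ be smooth instantaneously complete solutions on $M\times[0,S]$, where $S$ is any time in the common existence interval. Choosing $s=S$ in Theorem~\ref{thm main} gives $\omega_1\equiv\omega_2$ on $[0,\min\{S,S\}]=[0,S]$. If the common interval is half-open, say $[0,T)$, apply this on each $[0,S]$ with $S<T$; the flows agree on $[0,T)$.

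There is no real obstacle here. The only point needing care is that $s$ is arbitrary, and this works because none of the constants depend on $s$: both $\beta=C^{-1}$ and $K=0$ are independent of it. Completeness of $\omega_0$ is not required, since Theorem~\ref{thm main} only asks for completeness of $\hat\omega$.
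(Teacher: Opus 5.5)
Your proposal is correct and matches the paper's proof: the paper also applies Theorem~\ref{thm main} for every $s>0$ with $K=0$, $u=0$, and $\beta=C^{-1}$. Your remarks about choosing $s\ge S$ and exhausting a half-open existence interval by closed subintervals only make explicit what the paper leaves implicit.
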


\begin{proof}
    For every $s>0$, the hypotheses of Theorem~\ref{thm main} hold with $K=0$,
            $u=0$, and $\beta=C^{-1}$.
\end{proof}

For example, the corollary applies to every smooth Hermitian metric on
        $\mathbb C^n$ whose associated form equals $\omega_{\mathrm E}$ outside
        a compact set.
Positivity on that compact set gives the required two-sided comparison.
The initial metric need not be K\"ahler there.

For incomplete initial metrics, a uniformly negative upper bound for the
        reference Chern-Ricci form can replace the lower metric bound in
        Corollary~\ref{cor ricci flat reference}.
This is the situation on the unit ball.

\begin{corollary}
    \label{cor negative reference}
    Let $\hat\omega$ be a complete Hermitian metric on $M$, K\"ahler outside a
            compact set, such that
    \[
        -K_1\hat\omega\le\Ric(\hat\omega)\le-K_2\hat\omega
                \quad\text{for some}\quad K_1\ge K_2>0.
    \]
    Let $\omega_0$ be a Hermitian metric which is K\"ahler outside a compact set
            and satisfies $\omega_0\le C\hat\omega$ for some $C>0$.
    Then any two smooth instantaneously complete Chern-Ricci flows from
            $\omega_0$ agree throughout their common existence interval.
\end{corollary}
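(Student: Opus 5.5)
The plan is to verify the hypotheses of Theorem~\ref{thm main} with the given reference metric $\hat\omega$, with $u=0$, and for \emph{every} $s>0$. This yields agreement on $[0,\min\{S,s\}]$ for all $s$, hence on the whole common existence interval.

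First, the curvature condition \eqref{eqn reference curvature bound} holds with $K=K_1$, by the assumed lower bound $\Ric(\hat\omega)\ge-K_1\hat\omega$. The upper comparison \eqref{eqn reference bounds} holds with constant $\max\{C,1\}$; since Theorem~\ref{thm main} asks for $C\ge1$, we simply enlarge $C$ if necessary. Completeness of $\hat\omega$ and the K\"ahler conditions outside compact sets are assumed directly.

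The key step is the positivity condition \eqref{eqn reference positivity}. Using $\Ric(\hat\omega)\le-K_2\hat\omega$ and $\omega_0\ge0$, we get, for any $s>0$,
\[
    \omega_0-s\Ric(\hat\omega)\ge sK_2\hat\omega .
\]
So \eqref{eqn reference positivity} holds with $u\equiv0$ (which is smooth and bounded, with $\osc u=0$) and $\beta=sK_2>0$. Note that no lower bound on $\omega_0$ is needed: the strict negativity of $\Ric(\hat\omega)$ supplies the positivity that the lower metric bound supplied in Corollary~\ref{cor ricci flat reference}.

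Finally, let the two solutions be defined on $[0,S]$, or on $[0,S)$ when the common interval is half-open. For any $t$ in that interval, choose $s\ge t$ (and, in the half-open case, restrict both flows to $[0,t]$). Theorem~\ref{thm main} then gives $\omega_1(t)=\omega_2(t)$. There is no real obstacle here. The only points needing care are adjusting $C$ to satisfy $C\ge1$, and observing that $\beta$ depends on $s$, which is harmless because Theorem~\ref{thm main} allows any $\beta>0$.
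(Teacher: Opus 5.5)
Your proposal is correct and follows the paper's own proof: for every $s>0$, take $u=0$, $\beta=sK_2$ and $K=K_1$, enlarge $C$ so that $C\ge1$, and apply Theorem~\ref{thm main}. Your extra care with a half-open common interval (restricting both flows to $[0,t]$) is a harmless refinement of the same argument.
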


\begin{proof}
    For every $s>0$, \eqref{eqn reference positivity} holds with $u=0$ and
            $\beta=sK_2$.
    After increasing $C$ such that $C\ge1$, if necessary,
            we can apply Theorem~\ref{thm main}.
\end{proof}

\begin{proof}[Proof of Corollary~\ref{cor unit ball}]
    Let $\hat h$ be the Hermitian metric associated with $\hat\omega$, and put
            $q:=1-|z|^2$ and $\rho:=-\log q$.
    Direct differentiation gives
    \[
        \hat h_{i\bar j}=\frac{\delta_{ij}}q
                +\frac{\bar z^i z^j}{q^2}\quad\text{and hence}\quad
        \det(\hat h_{i\bar j})=q^{-(n+1)}.
    \]
    Thus $\hat\omega$ is K\"ahler and
            $\Ric(\hat\omega)=-(n+1)\hat\omega$.
    Moreover, $\rho$ is a proper exhaustion and
            $|\partial\rho|_{\hat h}^2=|z|^2\le1$.
    Since $\rho$ is a proper exhaustion and
            $|\grad_{\hat h}\rho|_{\hat h}\le\sqrt2$, every curve escaping
            compact subsets has infinite $\hat h$-length, proving completeness.
    Uniqueness follows from Corollary~\ref{cor negative reference}.

    For existence from $\omega_{\mathrm E}$, we verify the hypotheses of
            \cite[Theorems~1.1--1.3]{HLT19CVPDE}.
    We have $\omega_{\mathrm E}\le\hat\omega$ and
            $\sqrt{-1}\partial\bar\partial\rho=\hat\omega$.
    The metric $\hat h$ has bounded bisectional curvature, and both metrics have
            zero torsion.
    The Chern connection coefficients satisfy
    \[
        \Gamma^k_{ij}(\hat h)=q^{-1}
                (\delta_i^k\bar z^j+\delta_j^k\bar z^i).
    \]
    Since $(\hat h_{i\bar j})^{-1}\le qI$, this gives
            $|\nabla^{\hat h}\omega_{\mathrm E}|_{\hat h}\le C_n$, where $C_n>0$
            depends only on $n$.
    For every $s>0$,
    \[
        -\Ric(\hat\omega)+e^{-s}\bigl(\omega_{\mathrm E}+\Ric(\hat\omega)\bigr)
                \ge(n+1)(1-e^{-s})\hat\omega.
    \]
    Theorems~1.1 and 1.3 of \cite{HLT19CVPDE} yield a solution
            $\tilde\omega(\tau)$ of the normalized equation
            $\partial_\tau\tilde\omega
            =-\Ric(\tilde\omega)-\tilde\omega$ for all $\tau\ge0$,
            complete for $\tau>0$.
    Their Theorem~1.2 gives $\tilde\omega(\tau)\to\omega_{\mathrm E}$ in
            $C^\infty_{\mathrm{loc}}$ as $\tau\to 0^+$.
    Local $C^\infty$ convergence and the flow equation give convergence of all
            mixed space-time derivatives, by induction, and hence smoothness up
            to $\tau=0$ on compact subsets.
    Finally,
    \[
        \omega(t):=(1+t)\tilde\omega\bigl(\log(1+t)\bigr)
    \]
            solves \eqref{eqn chern flow} and is complete for every $t>0$.
\end{proof}

In the preceding applications the reference metric is known in advance.
We can also use a positive-time slice of one solution, provided it satisfies the
        two estimates below.
The required bounded function $u$ is then obtained from that solution's
        potential.

\begin{corollary}
    \label{cor positive time reference}
    Let $\omega_1(t)$ and $\omega_2(t)$ be smooth instantaneously complete
            Chern-Ricci flows with the same initial data $\omega_0$, which is
            K\"ahler outside a compact set.
    Suppose that, for some common time $s>0$, the form $\hat\omega:=\omega_1(s)$
            satisfies
    \[
        \Ric(\hat\omega)\ge-K\hat\omega\quad\text{and}\quad
                \omega_0\le C\hat\omega
    \]
            for some constants $K\ge0$ and $C>0$.
    Then the flows agree on $[0,s]$.
\end{corollary}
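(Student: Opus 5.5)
We apply Theorem~\ref{thm main} with reference metric $\hat\omega:=\omega_1(s)$, the same $s$, and produce the positivity data \eqref{eqn reference positivity} from the potential of $\omega_1$. First we check the hypotheses on $\hat\omega$ itself. It is complete by instantaneous completeness. It is K\"ahler outside a compact set because $d\hat\omega=d\omega_0$ by \eqref{eqn preservation closedness}. Condition \eqref{eqn reference curvature bound} holds by assumption, and \eqref{eqn reference bounds} holds after enlarging $C$ so that $C\ge1$. It then remains to find $\beta>0$ and a bounded smooth $u$ with $\omega_0-s\Ric(\hat\omega)+\sqrt{-1}\partial\bar\partial u\ge\beta\hat\omega$.

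For this, let $\psi_1$ be the potential of $\omega_1$ relative to $\hat\omega$, as in \eqref{eqn canonical potential}. By \eqref{eqn potential equation} at $t=s$,
\[
    \hat\omega=\omega_1(s)=\omega_0-s\Ric(\hat\omega)+\sqrt{-1}\partial\bar\partial\psi_1(\cdot,s).
\]
So the natural choice is $u:=\psi_1(\cdot,s)$ and $\beta:=1$, which gives \eqref{eqn reference positivity} with equality. The function $u$ is smooth on $M$ because $\omega_1$ is smooth up to $t=0$ on compact sets and the volume ratio is smooth there.

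The key step, and the main obstacle, is boundedness of $u$. The upper bound comes from Lemma~\ref{lem upper potential}: it needs only the complete reference metric $\hat\omega$, K\"ahler outside a compact set, with $\Ric(\hat\omega)\ge-K\hat\omega$ and $\omega_0\le C\hat\omega$, all of which we have. It gives $u\le n\int_0^s\log(C+K\tau)\,d\tau$. For the lower bound, Lemma~\ref{lem lower potential} cannot be used directly, since it presupposes \eqref{eqn reference positivity}. Instead we rerun its argument with $u=0$, using only the time-dependent reference forms $\alpha_t=\omega_0-t\Ric(\hat\omega)$. The substitute for positivity is to write
\[
    \alpha_t=\Bigl(1-\frac ts\Bigr)\omega_0+\frac ts\bigl(\hat\omega-\sqrt{-1}\partial\bar\partial u\bigr),
\]
which reintroduces $u$. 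This looks circular, but the circularity can be resolved. The first attempt is to apply Lemma~\ref{lem lower potential} in the form ``$\omega_0-s\Ric(\hat\omega)+\sqrt{-1}\partial\bar\partial u\ge\hat\omega$'' and inspect its proof. There, boundedness of $u$ is used only to turn the bound on $\tilde\psi=\psi-\frac tsu$ into a bound on $\psi$; the maximum principle step of Corollary~\ref{cor chern maximum principle} never uses that $u$ is bounded, since it applies to $\Psi$ without any a priori bound. The normalization $u\le0$ enters only through $e^{-\partial_t\psi/n}\ge e^{-\partial_t\tilde\psi/n}$, and this is available here because $u$ is bounded above by the estimate just obtained. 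Hence the argument yields $\tilde\psi\ge nt(\log(t/s)-1)$ for $t\le s$, with $u$ normalized by subtracting its finite supremum. At $t=s$, $\tilde\psi(\cdot,s)=\psi_1(\cdot,s)-u=0$, so this evaluation is vacuous. Instead we evaluate at an interior time, say $t=s/2$, where $\tilde\psi=\psi_1(\cdot,s/2)-\tfrac12u$. Combined with the upper bound on $\psi_1(\cdot,s/2)$ from Lemma~\ref{lem upper potential}, this gives
\[
    u\ge 2\psi_1(\cdot,s/2)-2ns\Bigl(\log\tfrac12-1\Bigr)\cdot\tfrac12.
\]
This still requires a lower bound on $\psi_1(\cdot,s/2)$. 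To close the loop we use concavity in time: $\partial_t^2\psi_1=-\mathcal R\le n/t$ by Corollary~\ref{cor scalar lower bound}. This relates $\psi_1(\cdot,s/2)$ to $\psi_1(\cdot,s)=u$ and its time derivative. Combining this with the bound $\partial_t\psi_1\le$ const coming from the trace inequality should give $u\ge -C'$. I expect this closing estimate to be the delicate point. If it fails, the fallback is to prove the lower bound for $u$ directly by a maximum principle for $\psi_1(\cdot,t)-\frac ts\psi_1(\cdot,s)$ on $[\varepsilon,s]$, as in Lemma~\ref{lem lower potential}.

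Once $u\in L^\infty$, Theorem~\ref{thm main} applies with $\hat\omega=\omega_1(s)$ and gives $\omega_1\equiv\omega_2$ on $[0,\min\{S,s\}]=[0,s]$.
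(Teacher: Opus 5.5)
\textbf{There is a genuine gap: the lower bound for $u$ is never proved.} Your setup agrees with the paper. With $\hat\omega=\omega_1(s)$ and $u:=\psi_1(\cdot,s)$ (potential relative to $\hat\omega$), \eqref{eqn potential equation} gives \eqref{eqn reference positivity} with equality and $\beta=1$, and Lemma~\ref{lem upper potential} bounds $u$ from above. (The paper writes $u=\psi(s)-s\,\partial_t\psi(s)$ with $\psi$ taken relative to $\omega_0$; this is the same function.)

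Your steps toward the lower bound all go the wrong way:
\begin{itemize}
\item The inequality you extract at $t=s/2$ from rerunning Lemma~\ref{lem lower potential} is reversed. After the normalization $\sup u=0$, the lemma gives $\psi_1(\cdot,s/2)-\tfrac12u\ge\tfrac{ns}{2}(\log\tfrac12-1)$, that is, $u\le2\psi_1(\cdot,s/2)-ns(\log\tfrac12-1)$. This is an \emph{upper} bound on $u$, so a lower bound on $\psi_1(\cdot,s/2)$ would not help.
\item The closing step invokes a bound $\partial_t\psi_1\le\mathrm{const}$ that is nowhere established. The trace inequality $\Delta_{\hat h}\psi_1\ge ne^{\partial_t\psi_1/n}-n(C+Kt)$ gives no pointwise control of $\partial_t\psi_1$.
\item That bound also points the wrong way. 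Since $u=\int_0^s\partial_\tau\psi_1\,d\tau$, a lower bound for $u$ needs a \emph{lower} bound on $\partial_\tau\psi_1$.
\item Your fallback maximum principle for $\psi_1(\cdot,t)-\frac ts\psi_1(\cdot,s)$ is the same $\tilde\psi$. It controls only $\tilde\psi$, which vanishes at $t=s$, not $u$.
\end{itemize}

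The missing observation is that $\partial_t\psi_1(\cdot,s)=\log(\omega_1(s)^n/\hat\omega^n)=0$, precisely because $\hat\omega=\omega_1(s)$. Apply the identity behind \eqref{eqn potential time inequality} to $\psi_1$, using $\psi_1(\cdot,0)=0$. Combine it with Corollary~\ref{cor scalar lower bound}, which is valid for $\omega_1$ since $\omega_1$ is K\"ahler outside a compact set. This gives directly
\[
    u=\psi_1(\cdot,s)-s\,\partial_t\psi_1(\cdot,s)
    =-\int_0^s\tau\,\partial_\tau^2\psi_1\,d\tau\ge-ns .
\]
Equivalently, $\partial_\tau\psi_1\ge-n\log(s/\tau)$ on $(0,s]$, and you integrate. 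This is exactly how the paper concludes. No rerun of Lemma~\ref{lem lower potential} and no maximum principle is needed.

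With this ingredient your loop would also close, because concavity gives $\psi_1(\cdot,s/2)\le u+n\int_{s/2}^s\log(s/\tau)\,d\tau$. At that point, though, the loop is redundant. With $-ns\le u\le n\int_0^s\log(C+K\tau)\,d\tau$, Theorem~\ref{thm main} applies as you describe.
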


\begin{proof}
    Define
    \[
        \psi(t):=\int_0^t\log\frac{\omega_1(\tau)^n}{\omega_0^n}\,d\tau
        \quad\text{and}\quad u:=\psi(s)-s(\partial_t\psi)(s).
    \]
    Corollary~\ref{cor scalar lower bound} gives $\partial_t^2\psi\le n/t$, so
            $u=-\int_0^s \tau\partial_\tau^2\psi\,d\tau\ge-ns$.
    On the other hand, Lemma~\ref{lem upper potential}, after increasing $C$ if
            necessary, gives
    \[
        u=\int_0^s\log\frac{\omega_1(\tau)^n}{\hat\omega^n}\,d\tau
                \le n\int_0^s\log(C+K\tau)\,d\tau.
    \]
    Thus $u$ is bounded.
    The identities
    \begin{align*}
        &\sqrt{-1}\partial\bar\partial\psi(s)
                =\hat\omega-\omega_0+s\Ric(\omega_0), \\
        \text{and}\quad&\sqrt{-1}\partial\bar\partial(\partial_t\psi)(s)
                =\Ric(\omega_0)-\Ric(\hat\omega)
    \end{align*}
            imply $\omega_0-s\Ric(\hat\omega)+\sqrt{-1}\partial\bar\partial
            u=\hat\omega$.
    Theorem~\ref{thm main} applies with $\beta=1$.
\end{proof}

\begin{remark}
    Theorem~\ref{thm main} does not establish uniqueness from an arbitrary
            possibly incomplete initial metric which is K\"ahler outside a
            compact set and has bounded Chern-Ricci curvature.
    These conditions have not been shown to imply the reference hypotheses of
            Theorem~\ref{thm main}.
    In particular, taking $\hat\omega=\omega_1(s)$ requires the uniform
            estimates in Corollary~\ref{cor positive time reference}; they do
            not follow from smooth local convergence at $t=0$ alone.
\end{remark}

\section{Hermitian Ricci flows on complex surfaces}
\label{sec hermitian}

In this section, we study the strong uniqueness for Riemannian Ricci flows on a complex surface $(M^2,J)$ that remain Hermitian with respect to the fixed complex structure $J$. The proof of Corollary~\ref{cor hermitian uniqueness} uses the following
        preservation of K\"ahlerity for the Riemannian Ricci flow
\begin{equation}
    \label{eqn real flow}
    \partial_t g=-2\Ric(g)
\end{equation}
        on a complex surface $(M^2,J)$, with $J$ fixed.
Throughout this section, $X,Y,Z$ denote smooth local real vector fields.

\begin{proposition}
    \label{prop hermitian preservation}
    Let $(M^2,J)$ be a complex surface, and let $g(t)$, $t\in[0,S]$, be a
            smooth solution of \eqref{eqn real flow} that is Hermitian for $J$
            at every time.
    If $g(s)$ is K\"ahler for some $s\in[0,S]$, then $g(t)$ is K\"ahler for
            every $t\in[0,S]$.
\end{proposition}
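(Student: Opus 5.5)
The plan is to encode the Kähler condition in the Lee form and show that this form satisfies, at each point, a linear ordinary differential equation in $t$. The conclusion then follows from uniqueness for linear ODEs, forwards and backwards from the time $s$.

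\textbf{Lee form and the Ricci form.} Write $\omega(t)=g(t)(J\cdot,\cdot)$. Since $n=2$, wedging with $\omega$ is an isomorphism from $1$-forms to $3$-forms. Hence there is a unique real $1$-form $\theta(t)$, the Lee form, with
\[
    d\omega=\theta\wedge\omega ,
\]
and $g(t)$ is Kähler exactly when $\theta(t)=0$. The flow is Hermitian for $J$ at every time, so $\partial_tg=-2\Ric(g)$ is $J$-invariant. Two consequences follow. First, the $J$-anti-invariant part of the Levi-Civita Ricci tensor vanishes identically; I will use this as a constraint. Second,
\[
    \partial_t\omega=-\rho,\qquad \rho(X,Y):=2\Ric(g)(JX,Y).
\]
Differentiating $d\omega=\theta\wedge\omega$ in time gives
\[
    \partial_t\theta\wedge\omega=-d\rho+\theta\wedge\rho .
\]

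\textbf{Computing $d\rho$.} I would use the classical Gauduchon-type comparison on Hermitian surfaces. It expresses the $J$-invariant part of the Riemannian Ricci form as a multiple of the Chern-Ricci form $\Ric(\omega)$ plus correction terms built from $\theta$ alone:
\begin{itemize}
    \item the $(1,1)$-part of $d\theta$ and of $(D\theta)$ composed with $J$;
    \item $\theta\wedge J\theta$;
    \item $|\theta|^2\omega$;
    \item $(d^*\theta)\,\omega$.
\end{itemize}
Since $d\Ric(\omega)=0$, the Chern part drops out of $d\rho$. The remaining terms are then rewritten using three facts:
\begin{itemize}
    \item $d(\theta\wedge\omega)=0$, which gives $d\theta\wedge\omega=0$ and so makes $d\theta$ anti-self-dual (primitive);
    \item the vanishing of the $J$-anti-invariant part of the Ricci tensor; in the same Gauduchon-type formulas this part is the $J$-anti-invariant part of $D\theta$ plus quadratic terms in $\theta$;
    \item identities in real dimension four relating $*$, $J$ and wedging with $\omega$.
\end{itemize}
The target is an identity
\[
    \partial_t\theta=\mathcal A_t(\theta)
\]
in which $\mathcal A_t$ is an endomorphism of $T^*M$ at each point. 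Its coefficients may involve the curvature of $g(t)$ and $\theta$ itself, but no derivatives of $\theta$ that cannot be reabsorbed. Equivalently, it suffices that every term is $\theta$ times a smooth, locally bounded coefficient.

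\textbf{Conclusion.} Once such an identity holds, fix $x\in M$. Then $t\mapsto\theta(x,t)$, viewed in a fixed trivialization of $T^*_xM$, solves a linear ODE on $[0,S]$ with continuous coefficients and satisfies $\theta(x,s)=0$. By ODE uniqueness in both time directions, $\theta(x,t)=0$ for all $t\in[0,S]$. Hence $d\omega(t)=0$, and $g(t)$ is Kähler for every $t$. No completeness or curvature bounds enter, because the argument is entirely pointwise.

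\textbf{Main obstacle.} The hard step is the derivative bookkeeping in $d\rho$. A priori, $d\rho$ contains terms like $d\bigl((d^*\theta)\omega\bigr)$ and $d\bigl((D\theta)^{J\text{-inv}}\bigr)$. These carry second derivatives of $\theta$, and they would turn the identity into a genuine PDE, where backward uniqueness is not elementary.

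What I would try first is to show that the constraint "$\Ric$ is $J$-invariant," together with $d\theta\wedge\omega=0$, forces the first-derivative combinations of $\theta$ to be algebraic in $\theta$. Concretely, I expect the $J$-anti-invariant part of $D\theta$ to equal a quadratic expression in $\theta$, so that the derivative terms either cancel or become linear in $\theta$.

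If that fails, the fallback is a weaker statement. I would aim to show that $\theta$ satisfies a system of the form
\[
    \partial_t\theta=\mathcal A(\theta,D\theta,D^2\theta)
\]
that is linear in $\theta$ and its derivatives, with smooth coefficients. I would then apply a local backward-uniqueness or energy argument on compact sets, for instance a Kotschwar-type argument on small balls where the flow is smooth and bounded. This fallback is clearly more delicate than the ODE route.
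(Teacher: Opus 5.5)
Your framework is the paper's: Lee form, differentiate $d\omega=\theta\wedge\omega$ in time, reduce to a pointwise linear ODE, then use uniqueness in both time directions. However, the step that carries all the content is missing. You never show that $d\rho$ is $\theta$ times a smooth coefficient, and the mechanism you propose does not get there. If you go through a Gauduchon-type formula, $\rho$ contains $(d^*\theta)\,\omega$ and the $J$-invariant part of $D\theta$. So $d\rho$ contains $d(d^*\theta)\wedge\omega$ and other second derivatives of $\theta$. The constraint that $\Ric$ is $J$-invariant at most makes the $J$-anti-invariant part of $D\theta$ algebraic in $\theta$; it says nothing directly about these $J$-invariant second-order terms. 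Removing them would require differentiating the constraint and commuting covariant derivatives, in effect re-deriving a Bianchi identity, and you do not do this. The fallback is not a proof either: a linear system in $(\theta,D\theta,D^2\theta)$ with unspecified structure has no general backward-uniqueness theorem, and Kotschwar-type arguments need a specific PDE--ODE structure. A smaller point: $d\theta\wedge\omega=0$ only says $d\theta\perp\omega$. The $(2,0)+(0,2)$ part of $d\theta$ is self-dual and unconstrained, so $d\theta$ need not be anti-self-dual.

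The missing idea is to skip the Chern--Ricci comparison and compute $\delta\rho$ in two ways. Write $\rho=\Ric(J\cdot,\cdot)$, which is half of yours. Integrability and $d\omega=\theta\wedge\omega$ give
\[
2(D_XJ)Y=\theta(JY)X-\theta(Y)JX-g(X,JY)\theta^\sharp+g(X,Y)J\theta^\sharp .
\]
With the contracted second Bianchi identity and $J$-invariance of $\Ric$, this yields $\delta\rho=(\tfrac12 d\mathcal R+\tfrac{\mathcal R}{2}\theta-\Ric^\sharp\theta)\circ J$. On the other hand, in real dimension four a $J$-invariant $\rho$ satisfies $*\rho=\tfrac{\mathcal R}{2}\omega-\rho$. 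Hence $\delta\rho=-*d*\rho=(\tfrac12 d\mathcal R+\tfrac{\mathcal R}{2}\theta)\circ J+*d\rho$. Comparing the two expressions, the $d\mathcal R$ terms cancel and $d\rho=(\Ric^\sharp\theta)\wedge\omega$. Using $*\rho$ once more gives $\theta\wedge\rho=(\tfrac{\mathcal R}{2}\theta-\Ric^\sharp\theta)\wedge\omega$. Together these give $\partial_t\theta=(\mathcal R\,\id-4\Ric^\sharp)\theta$. The Bianchi identity is exactly what absorbs the second derivatives you flagged as the main obstacle. Without it, or an equivalent explicit computation, your argument does not reach the ODE.
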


For a metric $g$ compatible with $J$, write $\omega(X,Y):=g(JX,Y)$ for its
        fundamental form.
On a complex surface, $\alpha\mapsto\alpha\wedge\omega$ is an isomorphism from
        real $1$-forms to real $3$-forms.
Hence there is a unique real $1$-form $\theta$, called the Lee form, such that
\begin{equation}
    \label{eqn lee definition}
    d\omega=\theta\wedge\omega.
\end{equation}
The metric $g$ is K\"ahler if and only if $\theta=0$. This is the key observation in this special dimension. 

\begin{proof}[Proof of Proposition~\ref{prop hermitian preservation}]
    Differentiating $g(JX,JY)=g(X,Y)$ in time shows that $\Ric(g)$ is
            $J$-invariant.
    Thus
    \[
        \rho(X,Y):=\Ric(g)(JX,Y)=-\Ric(g)(X,JY)
    \]
            is a real $(1,1)$-form and $\partial_t\omega=-2\rho$.
    Let $\mathcal R$ denote the real scalar curvature of $g(t)$.
    For a real $1$-form $\alpha$, set
            $(\Ric^\sharp\alpha)(X):=\Ric(\alpha^\sharp,X)$, with metric
            duals taken with respect to $g(t)$.
    We claim that
    \begin{equation}
        \label{eqn claim d rho}
        d\rho=\frac{\mathcal R}{2}\theta\wedge\omega-\theta\wedge\rho
                =(\Ric^\sharp\theta)\wedge\omega.
    \end{equation}
    The first equality is the $J$-invariant Ricci case of
            \cite[Lemma~4.1, (28)]{ADL26AGAG}.
    Differentiating \eqref{eqn lee definition} in time and using
            $\partial_t\omega=-2\rho$, together with \eqref{eqn claim d rho},
            we obtain
    \[
        (\partial_t\theta)\wedge\omega
        =-2d\rho+2\theta\wedge\rho
        =\bigl(\mathcal R\theta-4\Ric^\sharp\theta\bigr)\wedge\omega.
    \]
    Since $\alpha\mapsto\alpha\wedge\omega$ is an isomorphism, it follows that
    \[
        \partial_t\theta=(\mathcal R\,\id-4\Ric^\sharp)\theta.
    \]
    At each point of $M$, this is a linear ODE with smooth coefficients.
    The assumption that $g(s)$ is K\"ahler gives $\theta(s)=0$.
    Uniqueness in both time directions implies $\theta(t)=0$ for all
            $t\in[0,S]$, so $g(t)$ is K\"ahler throughout this interval.

    For completeness, we give a direct derivation of the above claim \eqref{eqn claim d rho}.
    Direct computation gives
    \begin{align*}
        &d\omega(X,Y,Z)-d\omega(X,JY,JZ)\\
        &\qquad=g((D_XJ)Y,Z)+g((D_YJ)Z,X)+g((D_ZJ)X,Y)\\
        &\qquad\qquad-g((D_XJ)JY,JZ)-g((D_{JY}J)JZ,X)
                -g((D_{JZ}J)X,JY)\\
        &\qquad=2g((D_XJ)Y,Z)+g((D_YJ)Z-(D_ZJ)Y,X)\\
        &\qquad\qquad-g((D_{JY}J)Z-(D_{JZ}J)Y,JX)\\
        &\qquad=2g((D_XJ)Y,Z),
    \end{align*}
            where we have used $Dg=0$, the torsion-freeness of $D$, and
    \[
        (D_XJ)J=-J(D_XJ),\qquad
        g((D_XJ)Y,Z)=-g(Y,(D_XJ)Z),
    \]
            together with the integrability identity
    \[
        (D_{JY}J)Z-(D_{JZ}J)Y
        =J\bigl((D_YJ)Z-(D_ZJ)Y\bigr).
    \]
    Since $d\omega=\theta\wedge\omega$, we also have
    \begin{align*}
        &d\omega(X,Y,Z)-d\omega(X,JY,JZ)\\
        &\qquad=\theta(JY)g(X,Z)-\theta(Y)g(JX,Z)
                -g(X,JY)\theta(Z)-g(X,Y)\theta(JZ).
    \end{align*}
    Comparing these expressions, we obtain
    \begin{equation}
        \label{eqn D J}
        2(D_XJ)Y=\theta(JY)X-\theta(Y)JX
                -g(X,JY)\theta^\sharp+g(X,Y)J\theta^\sharp.
    \end{equation}

    We use the orientation induced by $J$ and denote the Hodge star by $*$.
    For a smooth $2$-form $\beta$, its codifferential
            $\delta\beta:=-*d*\beta$ satisfies
    \[
        (\delta\beta)(X)
        =-\sum_{i=1}^4(D_{e_i}\beta)(e_i,X)
    \]
            in any local orthonormal frame $\{e_i\}_{i=1}^4$.
    By \eqref{eqn D J}, the contracted Bianchi identity and
            the $J$-invariance of $\Ric$, we obtain
    \begin{align*}
        (\delta\rho)(X)
        &=\sum_{i=1}^4(D_{e_i}\Ric)(e_i,JX)
                +\sum_{i=1}^4\Ric(e_i,(D_{e_i}J)X)\\
        &=\frac12d\mathcal R(JX)
                +\frac12\bigl(\mathcal R\theta(JX)
                    -\Ric(\theta^\sharp,JX)
                    +\Ric(X,J\theta^\sharp)\bigr)\\
        &=\left(\frac12d\mathcal R+\frac{\mathcal R}{2}\theta
                    -\Ric^\sharp\theta\right)(JX).
    \end{align*}

    On the other hand, evaluating in a $J$-adapted orthonormal frame gives
    \begin{equation}
        \label{eqn hodge identities}
        *(\alpha\wedge\omega)=-\alpha\circ J\quad\text{and}\quad
        *\beta=\langle\beta,\omega\rangle\,\omega-\beta(J\cdot,J\cdot)
    \end{equation}
            for every real $1$-form $\alpha$ and every real $2$-form $\beta$.
    Both identities are also consequences of
            \cite[Chapter~5, Theorem~3.16]{Wells08Book}.
    Since $\rho$ is a real $(1,1)$-form and
            $\langle\rho,\omega\rangle=\mathcal R/2$, we have
    \begin{equation}
        \label{eqn star rho}
        *\rho=\frac{\mathcal R}{2}\omega-\rho.
    \end{equation}
    Consequently,
    \begin{align*}
        \delta\rho
        &=-*d\left(\frac{\mathcal R}{2}\omega-\rho\right)\\
        &=-*\left(\frac12d\mathcal R\wedge\omega
                +\frac{\mathcal R}{2}d\omega\right)+*d\rho\\
        &=\left(\frac12d\mathcal R+\frac{\mathcal R}{2}\theta\right)
                \circ J+*d\rho.
    \end{align*}
    Comparing the two expressions for $\delta\rho$, it follows that
    \begin{equation}
        \label{eqn d rho}
        *d\rho=-(\Ric^\sharp\theta)\circ J
                =*((\Ric^\sharp\theta)\wedge\omega).
    \end{equation}

    Let $\iota_X$ denote contraction with $X$.
    For every real $2$-form $\beta$ and every real $1$-form $\eta$, the
            identity
    \[
        *(\beta\wedge\eta)=\iota_{\eta^\sharp}(*\beta)
    \]
            follows by evaluating in an orthonormal frame; see also
            \cite[Proposition~8.2~(v)]{Viaclovsky07Notes}.
    Thus, by \eqref{eqn star rho} and \eqref{eqn hodge identities}, we have
    \begin{align*}
        *(\theta\wedge\rho)
        &=\iota_{\theta^\sharp}(*\rho)\\
        &=\frac{\mathcal R}{2}\iota_{\theta^\sharp}\omega
                -\iota_{\theta^\sharp}\rho\\
        &=-\left(\frac{\mathcal R}{2}\theta
                    -\Ric^\sharp\theta\right)\circ J\\
        &=*\left(\left(\frac{\mathcal R}{2}\theta-\Ric^\sharp\theta\right)
                \wedge\omega\right).
    \end{align*}
    Combining this identity with \eqref{eqn d rho} proves
            \eqref{eqn claim d rho}.
\end{proof}

\begin{proof}[Proof of Corollary~\ref{cor hermitian uniqueness}]
    For $i=1,2$, let $\omega_i(t)$ be the fundamental form of $g_i(t)$.
    Proposition~\ref{prop hermitian preservation} shows that both flows are
            K\"ahler for all $t\in[0,S]$.
    Their Riemannian Ricci forms therefore coincide with their Chern-Ricci
            forms, so $\partial_t\omega_i=-2\Ric(\omega_i)$.
    Hence
    \[
        \tilde\omega_i(\tau):=\omega_i(\tau/2),\qquad 0\le\tau\le2S,
    \]
            are smooth instantaneously complete solutions of
            \eqref{eqn chern flow} with initial form $\omega_0$.
    Theorem~\ref{thm main} gives $\tilde\omega_1(\tau)=\tilde\omega_2(\tau)$
            for all $\tau\in[0,\min\{2S,s\}]$, which is the asserted equality
            after setting $\tau=2t$.
\end{proof}
\begin{remark} Corollary~\ref{cor hermitian uniqueness} has the following consequences. If the common initial K\"ahler metric $g_0$ is complete and $\omega_0$ satisfies
        \eqref{eqn initial ricci bounds}, Corollary~\ref{cor complete initial}
        and the same time change give $g_1(t)=g_2(t)$ at every common time $t$
        with $2K^+t<1$.
When $K^+>0$, equality at $t=1/(2K^+)$ follows by continuity if both flows are
        defined there.
If $g_0$ is further assumed to be Ricci-flat, comparison with the stationary flow shows
        that every such flow is stationary throughout its existence interval.
In particular, a smooth Ricci flow on $\mathbb R^4\cong\mathbb C^2$ starting
        from the Euclidean metric, complete for positive times and Hermitian
        for the standard complex structure at every time, is stationary.
\end{remark}

\bibliographystyle{amsplain}
\bibliography{refs}

\end{document}